\newtheorem{thm}{Theorem}
\newtheorem{lem}[thm]{Lemma}
\newtheorem{rem}[thm]{Remark}
\newcommand{\HCurlZ}{H_0(\mathbf{curl};\Omega)}
\newcommand{\HDiv}{H(\mathrm{div})}
\newcommand{\HCurl}{H(\mathbf{curl})}
\newcommand{\CURL}{\mathbf{curl}\,}
\newcommand{\bg}{\bm{g}}
\newcommand{\bu}{\bm{u}}
\newcommand{\bv}{\bm{v}}
\newcommand{\bw}{\bm{w}}
\newcommand{\br}{\bm{r}}
\newcommand{\p}{\partial}
\newcommand{\bn}{\bm{n}}
\newcommand{\bz}{\bm{z}}
\newcommand{\bt}{\bm{t}}
\newcommand{\cT}{\mathcal{T}}
\newcommand{\cF}{\mathcal{F}}
\newcommand{\cE}{\mathcal{E}}
\newcommand{\cV}{\mathcal{V}}
\newcommand{\cN}{\mathcal{N}}
\newcommand{\cM}{\mathcal{M}}
\newcommand{\lnorm}[2]{\left\| #1 \right\|_{L^2(#2)}}
\title[MG for 3D $H(\mathbf{curl})$ with DD smoothers]{Multigrid methods for 3$D$ $\HCurl$ problems with nonoverlapping domain decomposition smoothers}
\author{Duk-Soon Oh}
\address{Chungnam National University, 99 Daehak-ro, Yuseong-gu, Daejeon, Republic of Korea.}
\email{duksoon@cnu.ac.kr}
\thanks{This work was supported by research fund of Chungnam National University.}
\begin{document}

\begin{abstract}
We propose V--cycle multigrid methods for vector field problems arising from the lowest order hexahedral N\'{e}d\'{e}lec finite element. Since the conventional scalar smoothing techniques do not work well for the problems, a new type of smoothing method is necessary. We introduce new smoothers based on substructuring with nonoverlapping domain decomposition methods. We provide the convergence analysis and numerical experiments that support our theory.
\end{abstract}

\keywords{multigrid method, nonoverlapping domain decomposition, $\HCurl$, N\'{e}d\'{e}lec finite element}
    
\subjclass{65N55, 65N30, 65F08, 65F10}

\maketitle

\section{Introduction}
In this paper, the following boundary value problem in three dimensions will be considered:
\begin{equation}\label{eq:model problem}
    \begin{aligned}
    L \bu  := \CURL \, (\alpha \, \CURL \, \bu) +  \bu & = \bm{f} \text{ in } \Omega,\\
    \bn \times (\bu \times \bn) & = 0 \text{ on } \partial \Omega.
    \end{aligned}
\end{equation}
Here, $\Omega$ is a bounded convex hexahedral domain in three dimensions whose edges are parallel to the coordinate axes and $\bn$ is the outward unit normal vector of its boundary. We assume that the coefficient $\alpha$ is a strictly positive constant and $\bm{f}$ is in $\left(L^2(\Omega)\right)^3$. 

Our model problem \eqref{eq:model problem} is posed in the Hilbert space $\HCurlZ$, the subspace of $H(\mathbf{curl};\Omega)$ with zero tangential components on the boundary $\partial \Omega$. Here, the space $H(\mathbf{curl};\Omega)$ is defined by
\begin{equation}\label{eq:H(curl)}
    H(\mathbf{curl};\Omega) = \left\lbrace \bu \in \left(L^2(\Omega)\right)^3 :  \CURL \bu \in \left(L^2(\Omega)\right)^3  \right\rbrace.
\end{equation}
Applying integration by parts, the corresponding variational problem for \eqref{eq:model problem} can be obtained as follow:
Find $\bu\in\HCurlZ$ such that
\begin{equation}\label{eq:model variational problem}
 a(\bu, \bv)=(\bm{f}, \bv ) \qquad \forall\,\bv\in \HCurlZ,
\end{equation}
where
\begin{equation}\label{eq:def bilinear form}
\begin{aligned}
 a(\bw,\bv) & := \alpha  \int_{\Omega} \CURL \, \bw \cdot \CURL \,
   \bv \, d\bm{x} + \int_{\Omega} \bw \cdot \bv \, d\bm{x}, \\
(\bm{f}, \bv) & := \int_{\Omega} \bm{f} \cdot \bv \, d\bm{x}.
\end{aligned}
\end{equation}
We will also define the following bilinear forms for any subdomain $D \subset \Omega$ by:
\begin{equation}
    a_D(\bw, \bv) := \alpha  \int_{D} \CURL \, \bw \cdot \CURL \,
    \bv \, d\bm{x} + \int_{D} \bw \cdot \bv \, d\bm{x}
\end{equation}
and
\begin{equation}
    \left(\bw, \bv \right)_D = \int_{D} \bw \cdot \bv \, d\bm{x}.
\end{equation}

The problem \eqref{eq:model problem} is motivated by the eddy-current problem of Maxwell's equation, see \cite{Bossavit:2005, Monk:1991:Maxwell}. Specifically, time-dependent Maxwell's equations satisfy the following system:
\begin{alignat}{3}
& \epsilon \dfrac{\partial }{\partial t} \bm{E} 
 + \sigma \bm{E} && - \CURL \, \bm {H} && = \bm{J} \text{ in } \Omega \times [0, T]\\
& \mu \dfrac{\partial }{\partial t} \bm{H} && + \CURL \, \bm{E} 
&& = 0 \text{ in } \Omega \times [0, T],
\end{alignat}
where $\bm{E}$ is the electric field, $\bm{H}$ is the magnetic field and $\bm{J}$ is the intrinsic current. Eliminating $\bm{H}$ and employing implicit methods yield the following equation in each time step:
\begin{equation}\label{eq:Maxwell timestep}
\dfrac{1}{4} \Delta t^2 \CURL \left(\dfrac{1}{\mu} \CURL \, \bm{E}_n \right) + \left(\epsilon + \dfrac{1}{2} \sigma \Delta t\right) \bm{E}_n = \text{R.H.S.} \text{ in } \Omega.
\end{equation}
The problem \eqref{eq:Maxwell timestep} is equivalent to our model problem \eqref{eq:model problem}. Hence, efficient numerical methods for \eqref{eq:model problem} are essential for solving time-dependent Maxwell's equations. There have been a number of attempts for designing fast solvers related to multigrid methods or domain decomposition methods for the problem \eqref{eq:model problem}. For more details, see \cite{AFW:2000:H(div), Calvo:2020:OSHCurl, Calvo:2015:OSHCurl, Hiptmair:1999:MGHCurl, HT:2000:Vector, HX:2007:HXDecomp, DW:2015:BDDCHCurl, Toselli:2000:OSHCurl, Toselli:2004:FETIDPHCurl, Zampini:2017:BDDCHCurl, KV:2009:HCurl, ZVDK:2018:BDDCHCurl, Zampini:2017:ABDDC, HZ:2003:DDMaxwell, LWXZ:2007:RSCNSS}

Not like the elliptic problems posed in the $H^1$ Hilbert space, multigrid methods for vector field problems posed in $\HDiv$ or $\HCurl$ are challenging. This is because conventional smoothers designed for $H^1$ related problems, e.g., Jacobi, Gauss-Seidel, are not performing well for vector field problems; see \cite{CGP:1993:MLMFEM}. The structures of the null spaces of the differential operators make the hurdle. For the gradient operator, the kernel consists of all constants. However, all gradient fields and all curl fields are the null spaces of the curl and the divergence operators, respectively. Thus, a special treatment for handling the kernels is essential when building multigrid solvers for vector field problems. There have been several approaches in order to overcome the difficulties. In \cite{Hiptmair:1997:MGHDiv, Hiptmair:1999:MGHCurl}, Hiptmair suggested function space splitting methods based on Helmholtz type decompositions. In the algorithms in \cite{Hiptmair:1997:MGHDiv, Hiptmair:1999:MGHCurl}, the smoothing steps have been applied to the decomposed spaces separately, i.e, the range space and the null space. Later, Hiptmair and Xu developed nodal auxiliary space preconditioning techniques based on a new type of regular decomposition in \cite{HX:2007:HXDecomp}. In \cite{AFW:1997:H(div), AFW:2000:H(div), AFW:1998:H(DIV)}, smoothing methods based on geometric substructures have been proposed. Overlapping types of domain decomposition preconditioners have been applied to vector fields successfully. Another class of methods related to nonoverlapping substructure has been considered for $\HDiv$ problems by the author and Brenner in \cite{BO:2018:MGHdiv, BO:2018:MGHdivNE}.

In this paper, we suggest V--cycle multigrid methods for $\HCurl$ vector field problems \eqref{eq:model problem} with smoothers based on nonoverlapping domain decomposition preconditioners. We note that our approaches are $\HCurl$ counterparts of the methods in \cite{BO:2018:MGHdiv, BO:2018:MGHdivNE} and nonoverlapping alternatives of the method in \cite{AFW:2000:H(div)}, which reduce the computational complexity when applying the smoothers.

The rest of this paper is organized as follows. We introduce the edge finite elements for our model problem and the discretized problem in Section~\ref{sec:fem}. The V--cycle multigrid algorithms are presented in Section~\ref{sec:mg}. In Section~\ref{sec:analysis}, We next provide the convergence analysis for the suggested methods. The numerical experiments which support our theory are presented in Section~\ref{sec:numerics}, followed by concluding remarks in Section~\ref{sec:conclusions}.

\section{Finite element discretization}\label{sec:fem}
We introduce a hexahedral triangulation $\cT_h$ of the domain $\Omega$. The edge finite element space, also known as N\'{e}d\'{e}lec finite element space of the lowest order, is defined by
\begin{equation}\label{eq:nedlec space}
N_h := \left \lbrace \bu \,  : \, \bu_{\vert T} \in \mathcal{ND}(T), T \in \cT_h \mbox{ and } \bu \in H(\mathbf{curl};\Omega)  \right \rbrace,
\end{equation}
where
\begin{equation}\label{eq:nedlec form}
    \mathcal{ND}(T) :=
    \begin{bmatrix}
    a_1 + a_2 x_2 + a_3 x_3 + a_4 x_2 x_3\\
    b_1 + b_2 x_3 + b_3 x_1 + b_4 x_3 x_1\\
    c_1 + c_2 x_1 + c_3 x_2 + c_4 x_1 x_2
    \end{bmatrix}
\end{equation}
on each element with twelve constants $\lbrace a_i\rbrace$, $\lbrace b_i\rbrace$ and $\lbrace c_i \rbrace$, $i = 1, 2, 3, 4$; see \cite{Monk:2003:BookMaxwell, Nedlec:1980:FEM} for more details. We note that on each hexahedral element $T$, the tangential components of vector fields of the form \eqref{eq:nedlec form} are constants on the twelve edges of $T$. The twelve coefficients are completely determined by the average tangential components, which is obtained by
\begin{equation}\label{eq:NDdof}
    \lambda_e \left( \bv \right) := \frac{1}{|e|} \int_e \bv \cdot \bt_e  \, ds,
   \end{equation}
on the twelve edges. Here, $e$ is one of the twelve edges of $T$, $\vert e \vert$ is the length of $e$, and $\bt_e$ is the unit tangential vector along the edge $e$. The standard basis function for $N_h$ associated with $e$ is denoted by $\phi_e$. We note that $\lambda_e(\phi_e) = 1$ and $\lambda_{e'}(\phi_e) = 0$ for $e' \neq e$.

Applying the finite element method with $N_h$, the discrete problem for \eqref{eq:model variational problem} is given by the following form: Find $\bu_h \in N_h$ such that
\begin{equation}\label{eq:fem}
    a(\bu_h, \bv) = (\bm{f}, \bv)\qquad \forall\, \bv \in N_h.
\end{equation}

The operator $A_h : N_h \longrightarrow N_h'$ is defined by
\begin{equation}
    \left \langle A_h \bw_h, \bv_h \right \rangle = a(\bw_h, \bv_h) \qquad \forall\, \bv_h, \bw_h \in N_h,
\end{equation}
where $\langle \cdot, \cdot \rangle$ is the canonical bilinear form on $N_h' \times N_h$.
We also define $f_h \in N_h'$ in the following way:
\begin{equation}
    \left \langle f_h, \bv_h \right \rangle = (\bm{f}, \bv_h) \qquad \forall\, \bv_h \in N_h.
\end{equation}
Then, the discrete problem \eqref{eq:fem} can be written as 
\begin{equation}
    A_h \bu_h = f_h.
\end{equation}
\section{Multigrid algorithms}\label{sec:mg}
\subsection{Triangulations and grid transfer operators}\label{subsec:grid}
We introduce $\cT_0$, an initial triangulation of the domain $\Omega$. The triangulations $\cT_1, \cT_2, \cdots$ are obtained from the initial triangulation $\cT_0$ by uniform refinement with the relation $h_k = h_{k-1} / 2$, where $h_k$ is the mesh size of $\cT_k$. The lowest-order N\'{e}d\'{e}lec space associated with $\cT_k$ is denoted by $N_k$. Then, we can rewrite the corresponding $k-$th level discrete problem as
\begin{equation}\label{eq:kthEq}
    A_k \bu_k = f_k.
\end{equation}
In order to design V--cycle multigrid methods for solving \eqref{eq:kthEq}, two essential ingredients, i.e., intergrid transfer operators and smoothers, have to be defined properly. We first focus on the grid transfer operators. Due to the fact that the finite element spaces are nested, we can use the natural injection to define the coarse-to-fine operator $I_{k-1}^k : N_{k-1} \longrightarrow N_k$. The associated fine-to-coarse operator $I_{k}^{k-1} : N_k' \longrightarrow N_{k-1}'$ can be defined by 
\begin{equation}\label{eq:F2C}
    \langle I_k^{k-1}\ell,\bv\rangle=\langle\ell,I_{k-1}^k\bv\rangle
    \qquad\forall\,\ell\in N_k',\,\bv\in N_{k-1}.
\end{equation}

\subsection{Smoothers}\label{subsec:smoothers}
We now concentrate on the other ingredient, smoothers. Nonoverlapping type domain decomposition methods will be used to construct the smoothers. In order to keep consistency with the notations for the standard two-level domain decomposition methods, we will denote $\cT_{k-1}$ by $\cT_H$ and $\cT_k$ by $\cT_h$. It means that
all the coarse level and fine level settings are associated with $\cT_{k-1} ( = \cT_H)$ and $\cT_{k}(=\cT_h)$, respectively. We also define geometric substructures. We will use $\cF_{H}$, $\cE_{H}$, and $\cV_{H}$ to denote the sets of interior faces, edges, and vertices of $\cT_{H}$, respectively. We also define $\cE_h^D$ for any subdomain $D \subset \Omega$ by the set of interior edges associated with $\cT_h$ that are parts of $D$. Similarly, we define $\cV_h^D$ by the set of interior vertices related to $\cT_h$ that are contained in $D$.

We first introduce the interior space. For each element $T \in \cT_{H}$, we define the following subspace $N_{h}^T$ of $N_h$:
\begin{equation}\label{eq:SubdomainSpace}
    N_h^T=\{\bv\in N_h:\,\bv=\bm{0}\;\text{on}\; \Omega\setminus T\}.
\end{equation} 
We next denote by $J_T$ the natural injection from $N_h^T$ into $N_h$ and we define the operator $A_T:N_h^T\longrightarrow (N_h^T)'$ by
\begin{equation}\label{eq:ATDef}
    \langle A_T \bw, \bv \rangle = a(\bw, \bv) \qquad \forall\,\bv,\bw\in N_h^T.
\end{equation}
In the rest of this subsection, we will introduce two types of smoothing techniques, edge-based and vertex-based smoothers.

\subsubsection{Edge-based smoothers}\label{subsubsec:edge-based}
We first consider an edge-based smoother. For a given edge $E \in \cE_H$, we can find four elements, $\left \lbrace T_E^i\right \rbrace_{i = 1, 2, 3, 4}$ in $\cT_H$, and four faces, $\left \lbrace F_E^i\right \rbrace_{i = 1, 2, 3, 4}$ in $\cF_H$, that are sharing the edge $E$. We define the edge space $N_h^E$ of $N_h$ by 
\begin{equation}\label{eq:edge space}
	\begin{aligned}
		N_h^E = & \left\{ \bv \in N_h : \bv \cdot \bt_e = 0 \mbox{ for } e \in \cE_h^{\Omega} \left\backslash  \left(\left(\cup_{i=1}^4 \cE_h^{T_E^i}\right) \bigcup \left(\cup_{j=1}^4 \cE_h^{F_E^j}\right) \bigcup \cE_h^E \right) \right., \right.\\ 
		& \left. \hspace{10pt} \mbox{ and } a(\bv, \bw)  = 0 \quad \forall \, \bw \in 
		\left( N_h^{T_E^1} + N_h^{T_E^2} + N_h^{T_E^3} + N_h^{T_E^4} \right) \right\}.
	\end{aligned}
\end{equation}

We remark that due to \eqref{eq:edge space}, if $\bv\in N_h^E$ and $\bw$ have the same tangential components as $\bv$ on the edges associated with $\partial T_E^i, i = 1, 2, 3, 4$, we have the following property:
\begin{equation}\label{eq:MinimumEnergyE}
    a_{T_E^i}\left(\bv, \bv\right) \le a_{T_E^i}\left(\bw, \bw\right), \qquad i = 1, 2, 3, 4.
\end{equation}

The operator $J_E : N_h^E \longrightarrow N_h$ is defined as the natural injection. We next define the operator $A_E : N_h^E \longrightarrow \left( N_h^E\right)'$ by 
\begin{equation}\label{eq:AEDef}
	\langle A_E\bw,\bv\rangle= a(\bw,\bv)\qquad\forall\,\bv,\bw\in N_h^E.
\end{equation}
The edge-based smoothing operator $M_{E, h}^{-1}$ is constructed as follow:
\begin{equation}\label{eq:Preconditioner E}
	M_{E,h}^{-1}=\eta_E\left(\sum_{T\in\cT_{H}}J_T A_T^{-1} J_T^t+\sum_{E\in\cE_{H}}J_E A_E^{-1} J_E^t\right),
\end{equation}
where $\eta_E$ is a damping factor and $J_T^t : N_h' \longrightarrow \left(N_h^T\right)'$ and $J_E^t : N_h' \longrightarrow \left(N_h^E\right)'$ are the transposes of $J_T$ and $J_E$, respectively. We can choose the damping factor $\eta_E$ such that the spectral radius of $M_{E, h}^{-1} A_h \le 1$. We note that the condition is satisfied if $\eta_E \le 1/12$, which are assumed to be the case from now on. 

\subsubsection{Vertex-based smoothers}\label{subsubsec:vertex-based}
We now consider a vertex-based method. In order to define the vertex space $N_h^V$, we need geometric substructures associated with the given coarse vertex $V \in \cV_H$. For each $V \in \cV_H$, there are eight elements, $\left \lbrace T_V^i\right \rbrace_{i = 1, \cdots, 8}$ in $\cT_H$, twelve faces, $\left \lbrace F_V^i\right \rbrace_{i = 1, \cdots, 12}$ in $\cF_H$, and six edges, $\left \lbrace E_V^i\right \rbrace_{i = 1, \cdots, 6}$ in $\cE_H$, that have the vertex $V$ in common. The vertex space $N_h^V$ is defined by 
\begin{equation}\label{eq:vertex space}
	\begin{aligned}
		N_h^V = & \left\{ \bv \in N_h : \bv \cdot \bt_e = 0 \mbox{ for } e \in \cE_h^\Omega \left.\backslash \left(\left(\cup_{i=1}^8 \cE_h^{T_V^i}\right) \bigcup \left(\cup_{j=1}^{12} \cE_h^{F_V^j}\right) \bigcup \left(\cup_{l=1}^6 \cE_h^{E_V^l}\right) \right),\right. \right.\\ 
		& \left. \hspace{10pt} \mbox{ and } a(\bv, \bw)  = 0 \quad \forall \, \bw \in 
		\left( \sum_{i=1}^8 N_h^{T_V^i}\right) \right\}.
	\end{aligned}
\end{equation}

Note that \eqref{eq:vertex space} implies the following minimum energy property:
\begin{equation}\label{eq:MinimumEnergyV}
    a_{T_V^i}\left(\bv, \bv\right) \le a_{T_V^i}\left(\bw, \bw\right), \qquad i = 1, \cdots, 8
\end{equation}
for $\bv \in N_h^V$ and $\bw \in N_h$ with the same degrees of freedom as $\bv$ on $\partial T_V^i, i = 1, \cdots, 8$.

The vertex-based preconditioner is given by 
\begin{equation}\label{eq:Preconditioner P}
	M_{V,h}^{-1}=\eta_V(\sum_{T\in\cT_{H}}J_T A_T^{-1} J_T^t+\sum_{V\in\cV_{H}}J_V A_V^{-1} J_V^t).
\end{equation}
Here, $\eta_V$ is a damping factor and $J_V$, $J_V^t$, and $A_V$ are defined in a similar way to those in the edge-based method. The operator $J_V : N_h^V \longrightarrow N_h$ is the natural injection and $J_V^t : N_h' \longrightarrow \left(N_h^V\right)'$ is the transpose of $J_V$. We define $A_V : N_h^V \rightarrow \left(N_h^V\right)'$ as follow:
\begin{equation}\label{eq:AVDef}
	\langle A_V\bw,\bv\rangle= a(\bw,\bv)\qquad\forall\,\bv,\bw\in N_h^V.
\end{equation}
We note that if $\eta_V \le 1/8$, the spectral radius of $M_{V, h}^{-1} A_h \le 1$ and we will use the condition for the rest of this paper.

\subsection{V--cycle multigrid algorithm}
Combining all together, we now construct the symmetric V--cycle multigrid algorithm. Let $MG(k, g, \bz_0, m)$ be the output of the $k-$th level symmetric multigrid algorithm for solving $A_k \bz = g$ with initial guess $\bz_0 \in N_k$ and $m$ smoothing steps. The algorithm is defined in Figure~\ref{fig:vmg}.
\begin{figure}[htb]
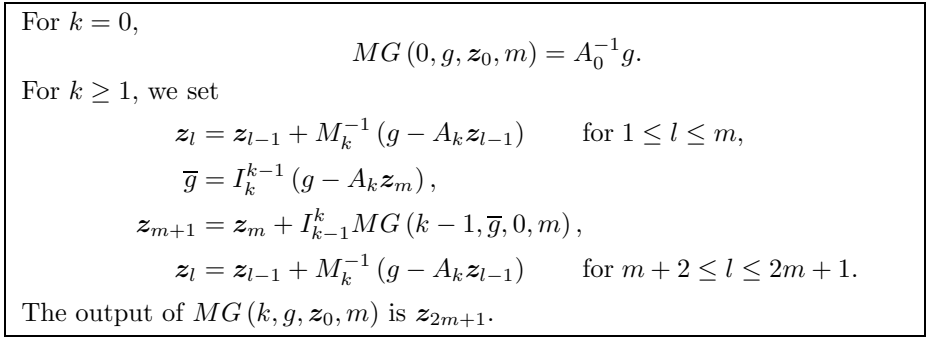

    \centering
        \framebox[0.95\width][l]{
        \begin{minipage}{\textwidth}
            For $k=0$,
            \begin{equation*}
                MG\left(0, g, \bz_0, m\right) = A_0^{-1}g.\nonumber
            \end{equation*}
            \par\noindent
            For $k\ge1$, we set
            \begin{align*}
            \bz_l &= \bz_{l-1} + M_k^{-1} \left( g - A_k \bz_{l-1} \right) \qquad \text{for}\; 1 \le l \le m,\nonumber\\
            \overline{g} &= I_{k}^{k-1} \left(g - A_k \bz_{m} \right),\nonumber\\
            \bz_{m+1} &= \bz_{m} + I_{k-1}^k MG\left(k-1, \overline{g}, 0, m\right), \nonumber\\
            \bz_l &= \bz_{l-1} + M_k^{-1} \left( g - A_k \bz_{l-1} \right)\qquad\text{for}\; m+2 \le l \le  2m+1.\nonumber
            \end{align*}
            The output of $MG\left(k, g, \bz_0, m\right)$ is $\bz_{2m + 1}$.
        \end{minipage}
        }
        \caption{V--cycle Multigrid Method}\label{fig:vmg}
\end{figure}

The smoothing operator $M_k^{-1}$ will be either $M_{E,k}^{-1}$ or $M_{V, k}^{-1}$. We note that given $\ell \in N_k'$, the cost of computing $M_k^{-1} \ell$ is $O(n_k)$ for both edge-based and vertex based smoothers, where $n_k$ is the number of degrees of freedom of $N_k$. Hence, the overall computational complexity for $MG(k, g, \bz_0, m)$ is also $O(n_k)$.
\section{Convergence analysis}\label{sec:analysis}
We first define operators that are useful for our analysis. The projection operator $P_H$ is defined by the Ritz projection from the fine level space $N_h$ to the coarse level space $N_H$ with respect to the bilinear form $a(\cdot, \cdot)$ and the identity operator on $N_h$ is denoted by $I$.

We will also need the Lagrange finite element space of order one, $W_h$ for our analysis. The degree of freedoms are chosen as the function evaluations at vertex points and are denoted by $\nu_v(p) := p(v)$. The standard basis function associated with the vertex $v$ is denoted by $\psi_v$, i.e., $\nu_v(\psi_v) = 1$ and $\nu_{v'}(\psi_v) = 0$ for $v' \neq v$.

\subsection{Stability estimates}
The next lemma, which is useful for the stability in the edge space, can be obtained by a direct calculation.
\begin{lem}\label{lem:coarse}
For a given coarse edge $E \in \cE_H$, which is parallel to the $x_1$ axis, there are four elements $T_E^i \in \cT_H, i = 1, 2, 3, 4$. Let $v$ be the midpoint of $E$. Then, there are six fine edges $e_i \in \cE_h, i = 1, \cdots, 6$, that share $v$. Let $e_{2i -1}$ and $e_{2i}$ be parallel to the $x_i$ axis for $i = 1, 2, 3$ and let us fix the directions for the tangential vectors, $\bt_{x_i}, i = 1, 2, 3$, for all corresponding fine edges. Without loss of generality, let $v$ be the endpoint of $e_1$, $e_3$, and $e_5$ with respect to the given tangential directions. We construct $\bu \in N_h\left(\cup_{i=1}^4 T_E^i\right)$ by the properties that 
\begin{itemize}
    \item $\bu \cdot \bt_{x_i} = -1$ on $e_{2i -1}$ and $\bu \cdot \bt_{x_i} = 1$ on $e_{2i}$ for $i = 1, 2, 3$.
    \item The tangential component of $\bu$ vanishes on the other edges in $\bigcup_{i=1}^4 \cE_h^{\p T_E^i}$.
    \item $\bu$ is orthogonal to $N_h^{T_E^i}$ with respect to the innerproduct $\left(\cdot, \cdot\right)_{T_E^i}$ for $i = 1, 2, 3, 4$.
\end{itemize}
Then, $\CURL\bu$ does not vanish.
\end{lem}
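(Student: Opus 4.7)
My plan is to argue by contradiction: suppose $\CURL \bu \equiv 0$ on the simply connected region $\Omega_E := \bigcup_{i=1}^4 T_E^i$. The explicit form~\eqref{eq:nedlec form} shows that inside $\mathcal{ND}(T)$ on any hexahedral element the kernel of $\CURL$ consists exactly of gradients of trilinear ($Q_1$) scalars, so globally $\bu = \nabla \phi$ for some $\phi$ in the $Q_1$ Lagrange space $W_h(\Omega_E)$. The goal is then to show that no such $\phi$ can be compatible with all three bulleted conditions simultaneously.

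First I would pin down $\phi$ at every mesh vertex using the first two bullets. Normalising $\phi(v)=0$ and using $\int_e \nabla\phi\cdot\bt_e\,ds = \phi(\mathrm{head})-\phi(\mathrm{tail})$, the values on $e_1,\dots,e_6$ immediately give $\phi=h$ at all six fine neighbours of $v$. The zero tangential hypothesis on the remaining fine edges of $\bigcup_i\cE_h^{\partial T_E^i}$ forces $\phi$ to be constant along each such edge, so tracing chains of these edges from the six neighbours of $v$ shows that $\phi = h$ at every mesh vertex lying on some $\partial T_E^i$ other than $v$ itself. Placing $v$ at the origin with $H=2h$, the only values left unresolved are $\gamma_i := \phi$ at the single interior vertex $(0,\pm h,\pm h)$ of each $T_E^i$.

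The contradiction would come from testing the orthogonality inside $T_E^{++}=[-h,h]\times[0,2h]\times[0,2h]$ against two different N\'ed\'elec edge basis functions $\phi_{e'},\phi_{e''}$ associated with interior edges of $T_E^{++}$. Each such basis function is continuous on its support (the union of the four fine elements sharing $e$) with $\DIV\phi_e \equiv 0$, so integration by parts reduces
\[
(\nabla\phi,\phi_e)_{T_E^{++}} = \int_{\partial\,\mathrm{supp}(\phi_e)} \phi\,(\phi_e\cdot\bn)\,dS,
\]
and only the two outer faces of $\mathrm{supp}(\phi_e)$ perpendicular to the direction of $e$ contribute, where $\phi_e\cdot\bn$ is a piecewise-bilinear $Q_1$ face-hat. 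Because $\bu|_{T_E^{++}}$ depends on $\gamma_1$ alone, each orthogonality collapses to a single linear equation in $\gamma_1$. Taking $e'$ to be the $x_1$-directed edge from $(0,h,h)$ to $(h,h,h)$ and $e''$ to be the $x_3$-directed edge from $(0,h,0)$ to $(0,h,h)$, a direct evaluation of the face integrals against the known piecewise-bilinear restrictions of $\phi$ yields $\gamma_1 = 17h/16$ from $e'$ and $\gamma_1 = 3h/4$ from $e''$. These are incompatible, so no such $\phi$ exists and $\CURL\bu$ cannot vanish identically.

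The main obstacle is the bookkeeping in those boundary-integral evaluations: one has to extend each $\phi_e$ across the four supporting fine elements, recognise its normal trace on each outer face of $\mathrm{supp}(\phi_e)$ as a piecewise $Q_1$ face-hat, and pair it against the piecewise-bilinear restriction of $\phi$. A more local variant that avoids handling the full support is to restrict attention to the corner fine element $T_0 = [0,h]^3 \subset T_E^{++}$: parameterising the three unknown interior tangentials of $T_0$ as $\beta_1,\beta_2,\beta_3$, the N\'ed\'elec form forces $\CURL\bu|_{T_0}=0$ if and only if $\beta_1=\beta_2=-\beta_3$, and one then derives a contradiction by showing that the values of $\beta_1,\beta_2,\beta_3$ actually selected by the global orthogonality conditions cannot satisfy this relation.
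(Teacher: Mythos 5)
Your argument is correct, and it supplies what the paper only asserts: the paper offers no proof of Lemma~\ref{lem:coarse} beyond the remark that it ``can be obtained by a direct calculation,'' which presumably means explicitly solving the local $6\times 6$ mass-matrix systems for the interior degrees of freedom of $\bu$ and evaluating $\CURL\bu$. Your contradiction argument is a genuinely different and cleaner route: it never solves for the interior DOFs, but instead uses the exactness of the discrete de Rham sequence on the contractible box $\bigcup_{i=1}^4 T_E^i$ (for the form \eqref{eq:nedlec form}, $\ker\CURL$ on one element is exactly $\nabla Q_1$, of dimension $7$) to reduce everything to a scalar $\phi$, pins $\phi$ down at every boundary vertex by the tangential data, and then shows the per-element orthogonality system is overdetermined and inconsistent in the single remaining unknown $\gamma_1$. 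I checked the two key evaluations and they are right: with $\phi=0$ at $v$, $\phi=h$ on the rest of $\partial T_E^{++}$, and $\phi=\gamma_1$ at the center, integrating $\partial_{x_1}\phi$ against the hat of the edge $(0,h,h)$--$(h,h,h)$ gives $-\tfrac{4h^2}{9}\gamma_1+\tfrac{17h^3}{36}=0$, i.e.\ $\gamma_1=17h/16$, while the edge $(0,h,0)$--$(0,h,h)$ gives $\tfrac{4h^2}{9}\gamma_1-\tfrac{h^3}{3}=0$, i.e.\ $\gamma_1=3h/4$; the integration by parts is legitimate because each lowest-order hexahedral N\'ed\'elec basis function is elementwise divergence-free with continuous (indeed vanishing) normal component across the interior faces of its support. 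Two small points worth making explicit in a final write-up: (i) the propagation of $\phi=h$ to all of $\partial T_E^i\setminus\{v\}$ uses that the fine boundary skeleton of each $T_E^i$ minus the edges incident to $v$ is still connected (true, since only edges meeting $v$ are removed and the boundary skeleton minus one vertex remains connected); (ii) the orthogonality in the lemma is with respect to $(\cdot,\cdot)_{T_E^i}$, so under the ansatz $\CURL\bu=0$ it is exactly $(\nabla\phi,\phi_e)_{T_E^i}=0$, as you use. What your approach buys is a conceptual explanation of why the lemma holds (six constraints against one free parameter per element); what the paper's implied brute-force computation buys is nothing extra here, so your proof can stand as a complete justification.
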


In \cite{AFW:2000:H(div)}, Arnold, Falk, and Winther suggested the following discrete orthogonal Helmholtz decomposition that plays an essential role in the analysis. 
\begin{lem}\label{lem:Helmholtz Decomposition}[Discrete Helmholtz decomposition] For any $\bw \in \left(I - P_H\right)N_h$, there exist $\br \in N_h$ and $q \in W_h$ such that
\begin{equation}\label{eq:HD}
\bw = \br + \nabla \, q,
\end{equation}
and
\begin{align}
    \lnorm{\br}{\Omega}^2+ \lnorm{\nabla \, q}{\Omega}^2
    &= \lnorm{\bw}{\Omega}^2, \label{eq:HD1}\\
    \alpha \lnorm{\br}{\Omega}^2 & \le C H^2 a \left( \bm{w}, \bm{w} \right),   \label{eq:HD2}\\
    \lnorm{q}{\Omega}^2 &\le C H^2 \lnorm{\bw}{\Omega}^2,\label{eq:HD3}
\end{align}
where the positive constant $C$ does not depend on the mesh size $h$.
\end{lem}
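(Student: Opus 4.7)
The plan is to construct the decomposition via a discrete Poisson solve on the fine nodal space, and then verify the three estimates using the $a$-orthogonality $\bw\perp_a N_H$ together with Aubin--Nitsche duality, the commuting N\'{e}d\'{e}lec interpolant on the coarse mesh, and a regular vector potential for the curl system on the convex polyhedron $\Omega$.

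First I would define $q\in W_h\cap H^1_0(\Omega)$ as the unique solution of $(\nabla q,\nabla\psi)_\Omega=(\bw,\nabla\psi)_\Omega$ for all $\psi\in W_h$, and set $\br=\bw-\nabla q\in N_h$. By construction $(\br,\nabla\psi)_\Omega=0$ for every $\psi\in W_h$, so $\br$ is discretely divergence-free and \eqref{eq:HD1} follows from the $L^2$-orthogonality of $\br$ and $\nabla q$. For \eqref{eq:HD3}, since $\nabla W_H\subset N_H$ and $\CURL\nabla\psi_H=\bm{0}$, the hypothesis $\bw\in(I-P_H)N_h$ yields $0=a(\bw,\nabla\psi_H)=(\bw,\nabla\psi_H)_\Omega$ for every $\psi_H\in W_H$, which gives $(\nabla q,\nabla\psi_H)_\Omega=0$; that is, $q$ is $H^1$-orthogonal to $W_H$. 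A standard Aubin--Nitsche duality argument---solve $-\Delta\eta=q$ with zero Dirichlet data, invoke the full $H^2$-regularity of the Poisson problem on the convex domain $\Omega$, and insert the coarse nodal interpolant of $\eta$---then produces $\lnorm{q}{\Omega}\le CH\lnorm{\nabla q}{\Omega}$, and combining with \eqref{eq:HD1} yields \eqref{eq:HD3}.

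The estimate \eqref{eq:HD2} is the main obstacle. The key observation is that $\CURL\br=\CURL\bw$ is an $L^2$ field with vanishing divergence, so the elliptic regularity of the curl system on the convex polyhedron $\Omega$ supplies a regular vector potential $\bm{\phi}\in(H^1(\Omega))^3$ with $\CURL\bm{\phi}=\CURL\bw$ and $\hnorm{\bm{\phi}}{\Omega}{1}\le C\lnorm{\CURL\bw}{\Omega}$. Let $\bm{\phi}_H\in N_H$ be the coarse N\'{e}d\'{e}lec interpolant of $\bm{\phi}$ (after a Cl\'ement-type pre-smoothing of $\bm{\phi}$ if needed to make the interpolant well-defined), so that the commuting property $\CURL\bm{\phi}_H=\Pi_H^{RT}\CURL\bm{\phi}$ and the approximation bound $\lnorm{\bm{\phi}-\bm{\phi}_H}{\Omega}\le CH\hnorm{\bm{\phi}}{\Omega}{1}$ both hold. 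Starting from $\lnorm{\br}{\Omega}^2=(\br,\bw)_\Omega$ (a consequence of $(\br,\nabla q)_\Omega=0$), I would split $\lnorm{\br}{\Omega}^2=(\br,\bw-\bm{\phi}_H)_\Omega+(\br,\bm{\phi}_H)_\Omega$ and use $a(\bw,\bm{\phi}_H)=0$ to convert the second piece into an $\alpha(\CURL\bw,\CURL\bm{\phi}_H)_\Omega$ term controlled by $a(\bw,\bw)$, while the first piece is bounded by Cauchy--Schwarz as $\lnorm{\bw-\bm{\phi}_H}{\Omega}\lnorm{\br}{\Omega}$ and handled by the approximation estimate. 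The hard part is arranging each residual term so that every factor is either $O(H)\sqrt{a(\bw,\bw)}$ or $\lnorm{\br}{\Omega}$ (to be absorbed), which forces the joint use of both the curl potential $\bm{\phi}$ for the divergence-free component of $\bw$ and an auxiliary scalar potential from $W_H$ for the gradient remainder, precisely in order to extract the quadratic $H^2$ factor and match the $\alpha$-scaling on the left.
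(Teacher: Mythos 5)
First, for context: the paper does not prove this lemma at all; it is imported verbatim from Arnold--Falk--Winther \cite{AFW:2000:H(div)}. So your proposal is really being measured against the argument in that reference. Your construction of the decomposition and your proofs of \eqref{eq:HD1} and \eqref{eq:HD3} are correct and standard: the discrete Poisson solve makes $\br$ and $\nabla q$ $L^2$-orthogonal, the identity $a(\bw,\nabla\psi_H)=(\bw,\nabla\psi_H)=0$ (valid because $\nabla W_H\subset N_H$ and $W_H\subset W_h$) makes $q$ $H^1$-orthogonal to $W_H$, and Aubin--Nitsche on the convex domain gives $\lnorm{q}{\Omega}\le CH\lnorm{\nabla q}{\Omega}$.

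The gap is in \eqref{eq:HD2}, which is the substantive content of the lemma, and it is not merely an omitted computation. Two concrete problems with your outline. First, the orthogonality $a(\bw,\bm{\phi}_H)=0$ controls $(\bw,\bm{\phi}_H)$, not $(\br,\bm{\phi}_H)$; the discrepancy $(\nabla q,\bm{\phi}_H)$ does not vanish because $\bm{\phi}_H$ is not divergence-free in any sense. Second, and fatally, even the term you would obtain, $\alpha(\CURL\bw,\CURL\bm{\phi}_H)$, has $\CURL\bm{\phi}_H=\Pi_H^{RT}\CURL\bm{\phi}\approx\CURL\bw$, so it is of size $\alpha\lnorm{\CURL\bw}{\Omega}^2\sim a(\bw,\bw)$ with \emph{no} factor of $H$. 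A bound of the form $\lnorm{\br}{\Omega}^2\le\cdots+C\,a(\bw,\bw)$ is vacuous for \eqref{eq:HD2}: every non-absorbable term must carry $H^2/\alpha$. Likewise, in your first piece the curl-free remainder $\bw-\bm{\phi}=\nabla s$ has $s$ only in $H^1(\Omega)$, and $\br$ is only \emph{discretely} divergence-free, so $(\br,\nabla s)$ cannot be made $O(H)$ by inserting an interpolant of $s$. You acknowledge the terms do not yet close; the point is that no rearrangement of this particular split will close them, because the vector potential $\bm{\phi}$ of $\CURL\bw$ is a primal object and carries no smallness.

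The missing idea is a duality (Aubin--Nitsche) argument for the curl--curl operator itself. One first replaces $\br$ by the exactly divergence-free field $\bz\in\HCurlZ$ with $\CURL\bz=\CURL\bw$, which costs only $O(h)\lnorm{\CURL\bw}{\Omega}$ via the commuting-diagram argument, and then estimates $\lnorm{\bz}{\Omega}$ by solving the \emph{dual} problem $\CURL\CURL\bm{\phi}=\bz$, $\DIV\bm{\phi}=0$, $\bm{\phi}\times\bn=\bm{0}$. Convexity of $\Omega$ gives $\bm{\phi}$ and $\CURL\bm{\phi}$ in $(H^1(\Omega))^3$ with norms controlled by $\lnorm{\bz}{\Omega}$, and the factor $H$ is extracted from the coarse interpolation error of this dual solution combined with the orthogonality $a(\bw,\Pi_H\bm{\phi})=0$; the second factor of $H$ then comes from $a(\bw,\bw)\ge\alpha\lnorm{\CURL\bw}{\Omega}^2$. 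Finally, the parenthetical ``Cl\'ement-type pre-smoothing'' is not innocuous: the canonical N\'ed\'elec interpolant is not bounded on $(H^1(\Omega))^3$, and a plain Cl\'ement operator destroys the commuting property you invoke; one needs a Sch\"oberl-type commuting quasi-interpolant or the $L^p$-based interpolation theory to make that step rigorous.
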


\begin{rem}\label{rem:coarse edge decomposition}
For given $\bw \in N_h$ and $E \in \cE_H$, we find that 
\begin{equation}\label{eq:coarse edge decomposition}
\bw \cdot \bt_e = \lambda_E(\bw) \phi_E \cdot \bt_e + \nabla p_E \cdot \bt_e = \lambda_E(\bw) + \nabla p_E \cdot \bt_e, \quad \forall e \in \cE_h^E.
\end{equation}
Here, $p_E \in W_h$ is a constant multiple of the standard basis functions of $W_h$ associated with the interior node of $E$. For more details, see (6.1) of \cite{Toselli:2004:FETIDPHCurl} and (2.4) of \cite{DW:2015:BDDCHCurl}.
\end{rem}

The edge-based smoother has the following stable decomposition result:
\begin{lem}\label{lem:StabilityE}
For any $\bw \in \left( I - P_H \right) N_h$, there exist a constant $C_{E,\dag}$ that does not depend on $\alpha$, $h$ and the number of elements in $\cT_H$ and
a decomposition
\begin{equation*}
    \bw = \sum_{T \in \cT_H} \bw_T + \sum_{E \in \cE_H} \bw_E,
\end{equation*}
such that
\begin{equation}\label{eq:StabilityEstimateE}
    \sum_{T \in \cT_H} a \left( \bw_T , \bw_T \right) + \sum_{E \in \cE_H} a \left( \bw_E , \bw_E \right) \le C_{E,\dag} a\left( \bw, \bw \right).
\end{equation}
\end{lem}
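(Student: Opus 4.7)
The plan is to first strip off an interior component on each coarse cell, then resolve the remaining skeleton residual into edge-patch contributions via a partition of unity on the coarse skeleton, and finally invoke the discrete Helmholtz decomposition of Lemma~\ref{lem:Helmholtz Decomposition} to verify the energy bound (\ref{eq:StabilityEstimateE}). The orthogonality built into the definitions of $N_h^T$ and $N_h^E$, combined with the minimum-energy property (\ref{eq:MinimumEnergyE}), will allow me to replace each piece by any convenient admissible extension when bounding its energy, which is the lever that makes the analysis tractable.

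For the interior step, I would take $\bw_T \in N_h^T$ to be the $a_T$-orthogonal projection of $\bw|_T$ onto $N_h^T$; Pythagoras immediately gives $a_T(\bw_T,\bw_T) \le a_T(\bw,\bw)$, and summation over $T \in \cT_H$ yields $\sum_T a(\bw_T,\bw_T) \le a(\bw,\bw)$. The residual $\tilde{\bw} := \bw - \sum_T \bw_T$ is then discrete $a_T$-harmonic in every coarse cell and agrees with $\bw$ on the coarse skeleton. I would next introduce a partition of unity $\{\theta_E\}_{E \in \cE_H}$ on the fine edges of the coarse skeleton, with $\theta_E$ supported in the four-cube patch $\bigcup_i T_E^i$ and $\sum_E \theta_E(e) = 1$ on every skeleton edge (for instance, $\theta_E(e)=1$ when $e$ lies on $E$ itself and $\theta_E(e)=1/4$ when $e$ lies on a coarse face whose boundary contains $E$ but is off every coarse edge). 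For each $E$, define $\bw_E$ by prescribing tangential DOFs $\theta_E(e)(\tilde{\bw}\cdot\bt_e)$ on fine edges of $\partial T_E^i$ and extending $a_{T_E^i}$-harmonically inside each surrounding coarse cell; this places $\bw_E$ in $N_h^E$ by construction. Since $\sum_E \bw_E$ and $\tilde{\bw}$ are both discrete $a_T$-harmonic in every coarse cell with matching boundary tangential DOFs (using $\sum_E \theta_E = 1$), they must coincide, so $\bw = \sum_T \bw_T + \sum_E \bw_E$ as required.

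The crux is controlling $\sum_E a(\bw_E,\bw_E)$. I would apply Lemma~\ref{lem:Helmholtz Decomposition} to write $\bw = \br + \nabla q$ with $q \in W_h$ and handle the two pieces separately. The minimum-energy property (\ref{eq:MinimumEnergyE}) lets me bound each $\bw_E$ on every $T_E^i$ by any extension with the correct boundary data, not necessarily the harmonic one. The $\nabla q$ contribution has vanishing curl, so only the $L^2$ cost remains, controlled by (\ref{eq:HD3}) together with a standard nodal-based stable decomposition of $q$ in the Lagrange space $W_h$. The $\br$ contribution carries both $L^2$ and curl energy, but the sharp $L^2$ bound (\ref{eq:HD2}), namely $\alpha\|\br\|^2 \le CH^2 a(\bw,\bw)$, is traded against the inverse estimate $\|\CURL \bv\|_{L^2(T)} \le Ch^{-1}\|\bv\|_{L^2(T)}$, with the $h^{-2}$ blow-up absorbed by the $H^2$ gain since $H \sim h$ under uniform refinement. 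The main obstacle is precisely this curl control for the $\br$ piece: the interplay between the partition of unity $\theta_E$, the discrete harmonic extension defining $\bw_E$, and the inverse estimate must be kept sufficiently local that the final constant is independent of $\alpha$, $h$, and the number of elements in $\cT_H$; by contrast, the $L^2$ bounds on both the $\br$ and $\nabla q$ pieces are essentially routine consequences of the partition-of-unity structure and the Helmholtz estimates (\ref{eq:HD1})--(\ref{eq:HD3}).
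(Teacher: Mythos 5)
Your overall architecture (interior components plus a skeleton partition of unity, discrete-harmonic/minimum-energy extensions, the Helmholtz splitting $\bw=\br+\nabla q$, and trading the inverse estimate against \eqref{eq:HD2} for the $\br$ part) matches the paper's. The genuine gap is in the gradient part, and it sits exactly where you declare the problem ``essentially routine.'' Your $\bw_E$ is built by multiplying the tangential \emph{degrees of freedom} of $\bw$ by the skeleton weights $\theta_E$ and extending discretely harmonically. Applied to $\nabla q$, the resulting piece is \emph{not} curl-free: a weighted family of edge DOFs of a discrete gradient is again a discrete gradient only if the weights have vanishing circulation around every elementary loop, and your $\theta_E$ jumps from $1$ on $E$ to $1/4$ on the adjacent coarse faces to $0$ beyond, so loops crossing these transitions pick up nonzero circulation. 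Hence ``the $\nabla q$ contribution has vanishing curl, so only the $L^2$ cost remains'' is false, and the fallback --- an inverse estimate plus \eqref{eq:HD1} and \eqref{eq:HD3} --- yields $\alpha h^{-2}\lnorm{\nabla q}{\Omega}^2\lesssim \alpha h^{-2}\lnorm{\bw}{\Omega}^2$, which is not bounded by $C\,a(\bw,\bw)$ uniformly in $h$ and $\alpha$. This is precisely the failure mode of naive smoothers in $\HCurl$ that the whole construction is designed to avoid.

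To close the gap you need what the paper actually does: decompose the scalar potential $q$ nodally, use the exact gradients $\nabla\left(\cN^{-1}q_{v}\right)$ of those nodal pieces as \emph{curl-free competitors} in the minimum-energy property \eqref{eq:MinimumEnergyE}, so that $a_T$ of each extension is bounded by a pure $L^2$ quantity $Ch^{-2}\lnorm{q}{T}^2$ carrying no factor of $\alpha$, and then reconcile the mismatch between your DOF-weighted splitting and the potential-weighted one via the correction terms $\widetilde{\bg}^{(2),e}_{E,F}$ together with Lemma~\ref{lem:coarse}. The coarse-edge DOFs (weight $1$) require a separate argument: there a local curl-free competitor exists only because, by Remark~\ref{rem:coarse edge decomposition} and $\bw\in(I-P_H)N_h$, the tangential data on $E$ reduces to $\nabla p_E\cdot\bt_e$ for a hat function at the midpoint of $E$; this is the only place the hypothesis $\bw\in(I-P_H)N_h$ enters beyond Lemma~\ref{lem:Helmholtz Decomposition}, and your proposal never invokes it. Finally, you have the difficulty inverted: curl control of the $\br$ piece is the routine step (inverse estimate absorbed by the $H^2$ gain in \eqref{eq:HD2}), while curl control of the $\nabla q$ piece is the crux.
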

\begin{proof}
For given $\bw \in (I - P_h)N_h$, we consider the decomposition \eqref{eq:HD} in Lemma~\ref{lem:Helmholtz Decomposition}, i.e., $\bw = \br + \nabla \, q$.

For each coarse edge $E \in \cE_H$, we have four coarse faces $F_E^i \in \cF_H, i = 1, 2, 3, 4$ and four elements $T_E^i \in \cT_H, i = 1, 2, 3, 4$, that are sharing $E$. We denote by $\cN_{F_E^i}$ the number of edges in $\cE_H$ that are parts of $\p F_E^i$. We now construct $\br_{E, F} \in N_h^E$ and $\br_{E, E} \in N_h^E$ in the following way:
\begin{equation}
    \br_{E, F} \cdot \bt_e =
        \dfrac{1}{\cN_{F_E^i}} \br \cdot \bt_e  \mbox{ for } e \in \cE_h^{F_E^i} \qquad \mbox{ for } i = 1, 2, 3, 4,
\end{equation}
\begin{equation}
    \br_{E, E} \cdot \bt_e = 
        \br \cdot \bt_e  \qquad \mbox{ for } e \in \cE_h^E,
\end{equation}
and \eqref{eq:edge space}. Then, $\br$ and $\sum_{E \subset \cE_H} \left(\br_{E, F} + \br_{E, E}  \right)$ have identical degrees of freedom on the edges contained in the boundaries of elements in $\cT_H$. Thus, we can find $\br_T \in N_h^T$ such that
\begin{equation}
    \br = \sum_{T \in \cT_H} \br_T + \sum_{E \in \cE_H} \left(\br_{E, F} + \br_{E, E}\right).
\end{equation}
We first consider the vector fields associated with the interior spaces $N_h^T$. 
We note that the interior spaces are orthogonal to all the edge spaces $N_h^E$ with respect to the bilinear form $a(\cdot, \cdot)$. Also, the interior spaces are mutually orthogonal. Thus, we have the following estimate putting together with \eqref{eq:HD1}, \eqref{eq:HD2}, and a standard inverse inequality:
\begin{align}\label{eq:rTEstE}
    \sum_{T \in \cT_H}a\left(\br_T,\br_T\right) & = a\left(\sum_{T\in\cT_H}\br_T,\sum_{T\in\cT_H}\br_T\right)\notag\\
          &\leq a(\br,\br)\notag\\
          &= \sum_{T\in\cT_H}\left(\alpha\lnorm{\CURL\br}{T}^2+\lnorm{\br}{T}^2\right)\\
          &\leq \sum_{T\in\cT_H}\left(C\dfrac{\alpha}{h^2}\lnorm{\br}{T}^2+\lnorm{\br}{T}^2\right)
          \leq Ca(\bw,\bw).\notag
\end{align}

We next consider the vector fields associated with edges. For any $E \in \cE_H$, we construct $\widetilde{\br}_{E, F}$ in the following way:
\begin{equation}
\widetilde{\br}_{E, F} = \sum_{e \in  \cM} \lambda_e(\br_{E, F}) \phi_e,
\end{equation}
where $\cM = \cup_{i=1}^4 \cE_h^{F_E^i}$. From \eqref{eq:MinimumEnergyE} and a scaling argument, we obtain
\begin{equation}\label{eq:rEEst1}
a(\br_{E, F}, \br_{E, F}) \le a(\widetilde{\br}_{E, F}, \widetilde{\br}_{E, F})
\end{equation}
and
\begin{equation}\label{eq:rEEst2}
\lnorm{\widetilde{\br}_{E, F}}{T_E^i} \le C \lnorm{\br}{T_E^i}, \qquad i = 1, 2, 3, 4.
\end{equation}
Using \eqref{eq:HD1}, \eqref{eq:HD2}, \eqref{eq:rEEst1}, \eqref{eq:rEEst2}, and an inverse inequality, we obtain
\begin{equation}\label{eq:rEEstE}
\begin{aligned}
    \sum_{E \in \cE_H} a(\br_{E, F}, \br_{E, F}) & \le \sum_{E \in \cE_H} a(\widetilde{\br}_{E, F}, \widetilde{\br}_{E, F}) \\
    & = \sum_{E \in \cE_H} \sum_{i=1}^4 \left[\alpha \lnorm{\CURL \, \widetilde{\br}_{E, F}}{T_E^i}^2 + \lnorm{\widetilde{\br}_{E, F}}{T_E^i}^2 \right] \\
    & \le C \sum_{E \in \cE_H} \sum_{i=1}^4 \left[\dfrac{\alpha}{h^2} \lnorm{\br}{T_E^i}^2 + \lnorm{\br}{T_E^i}^2\right] \le C a(\bw, \bw).
\end{aligned}
\end{equation}
We therefore have by \eqref{eq:rTEstE} and \eqref{eq:rEEstE} 
\begin{equation}\label{eq:bEstAll}
    \sum_{T \in \cT_H} a(\br_T, \br_T) + \sum_{E \in \cE_H} a(\br_{E, F}, \br_{E, F}) \le C a(\bw, \bw).
\end{equation}

Let $\bg = \nabla\, q$. We construct $\bg_{E, F}$ and $\bg_{E, E}$ in exactly the same way with $\br_{E, F}$ and $\br_{E, E}$. Now that $\bg$ and $\sum_{E \in \cE_H} (\bg_{E, F} + \bg_{E, E})$ have the same degrees of freedom on the edges of $N_H$, we have
\begin{equation}
    \bg = \sum_{T \in \cT_H} \bg_T + \sum_{E \in \cE_H} \left(\bg_{E, F} + \bg_{E, E} \right)
\end{equation}
for unique vector fields $\bg_T \in N_h^T$. Then, the orthogonal properties and \eqref{eq:HD1} imply the estimate
\begin{align}\label{eq:gTEstE}
    \sum_{T \in \cT_H}a\left(\bg_T,\bg_T\right) & = a\left(\sum_{T\in\cT_H}\bg_T,\sum_{T\in\cT_H}\bg_T\right)\notag\\
    & \le a(\bg, \bg) = \lnorm{\nabla q}{\Omega}^2 \le \lnorm{\bw}{\Omega}^2 \le a(\bw, \bw).
\end{align}

For a scalar function $z$, we define $z_v$ for $v \in \cV_h$ by 
\begin{equation}
    z_v := \nu_v(z)\psi_v.
\end{equation}
For a given $E \in \cE_H$, let $v_E \in \cV_h$ be the midpoint of $E$. Similarly, we denote by $v_F \in \cV_h$ the midpoint of $F\in \cF_H$.

For each $E \in \cE_H$, we now construct $\widetilde{\bg}_{E, F}^{(1)} \in N_h$ and $\widetilde{\bg}_{E, F}^{(2)} \in N_h^E$. The vector field $\widetilde{\bg}_{E, F}^{(1)}$ is defined by
\begin{equation}
    \widetilde{\bg}_{E, F}^{(1)} = \nabla \, \left( \sum_{i=1}^4 \dfrac{1}{\cN_{F_E^i}} q_{v_{F_E^i}}\right).
\end{equation}

For $e \in \cE_h^{F_E^i}$, let $E_e^i \in \cE_H^{\p F_E^i}$ be the coarse edge that shares one vertex point with $e$.
Then, $\widetilde{\bg}_{E, F}^{(2), e} \in N_h^E$ is defined by 
\begin{equation}
\widetilde{\bg}_{E, F}^{(2), e} \cdot \bt_e = \nabla \, \left(\dfrac{1}{\cN_{F_E^i}} q_{v_{E_e^i}}\right)\cdot \bt_e\qquad \mbox{ for } e,
\end{equation}
\begin{equation}
    \widetilde{\bg}_{E, F}^{(2), e} \cdot \bt_{e'} = 0 \mbox{ for } e' \in \cE_h^{F_E^i} \backslash \lbrace e \rbrace \qquad \mbox{ for } i = 1, 2, 3, 4,
\end{equation}
and \eqref{eq:edge space}. We construct $\widetilde{\bg}_{E, F}^{(2)}$ as follow:
\begin{equation}\label{eq:gEFEst3}
    \widetilde{\bg}_{E, F}^{(2)} = \sum_{i=1}^4 \sum_{e \in \cE_h^{F_E^i}} \widetilde{\bg}_{E, F}^{(2), e}.
\end{equation}
We note that $\widetilde{\bg}_{E, F}^{(1)} + \widetilde{\bg}_{E, F}^{(2)}$ and $\bg_{E, F}$ have the same degrees of freedom on the edges in $\left(\cup_{j=1}^4 \cE_h^{F_E^j}\right) \bigcup \cE_h^E$.

We first estimate $\widetilde{\bg}_{E, F}^{(1)}$. By a standard inverse inequality and a scaling argument, we obtain
\begin{equation}\label{eq:gEFEst1}
    \lnorm{\widetilde{\bg}_{E, F}^{(1)}}{T_E^i}^2 \le \dfrac{C}{h^2}\lnorm{q}{T_E^i}^2, \qquad i = 1, 2, 3, 4.
\end{equation}
Hence, from \eqref{eq:HD3}, and \eqref{eq:gEFEst1} we have
\begin{equation}\label{eq:gEFEstAll1}
    \sum_{E \in \cE_H} a(\widetilde{\bg}_{E, F}^{(1)}, \widetilde{\bg}_{E, F}^{(1)}) \le \dfrac{C}{h^2}\lnorm{q}{\Omega}^2 \le C \lnorm{\bw}{\Omega}^2 \le C a(\bw, \bw).
\end{equation}

We next consider $\widetilde{\bg}_{E, F}^{(2)}$. For each $v_{E_e^i}$, there exist six fine edges $\lbrace e_i \rbrace, i = 1, \cdots , 6$ in $\cE_h$ that have $v_{E_e^i}$ in common. We define $\widehat{\bg}_{E,F}^{(2), e} \in N_h^E$ as follow:
\begin{equation}
    \widehat{\bg}_{E,F}^{(2), e} \cdot \bt_{e'} = \nabla \, \left(\dfrac{1}{\cN_{F_E^i}} q_{v_{E_e^i}}\right) \bt_{e'}, \qquad \mbox{ for } e' = e_i, i = 1, \cdots, 6
\end{equation}
and \eqref{eq:edge space}. We compare $\widetilde{\bg}_{E,F}^{(2), e}$ and $\widehat{\bg}_{E,F}^{(2), e}$. Let $\lbrace T_{E_e^i}^j\rbrace, j = 1, 2, 3, 4$ be four elements in $\cT_H$, that are sharing $E_e^i$. Because $\lnorm{\widehat{\bg}_{E,F}^{(2), e}}{T_{E_e^i}^j} = 0$ if and only if $\widetilde{\bg}_{E,F}^{(2), e} = 0$, we obtain 
\begin{equation}\label{eq:gEFEst4}
    \lnorm{\widetilde{\bg}_{E,F}^{(2), e}}{T_{E_e^i}^j} \le C \lnorm{\widehat{\bg}_{E,F}^{(2), e}}{T_{E_e^i}^j}, \qquad j = 1, 2, 3, 4.
\end{equation}
Furthermore, it follows from Lemma~\ref{lem:coarse} that $\CURL \, \widehat{\bg}_{E,F}^{(2), e} = 0$ if and only if $\widetilde{\bg}_{E,F}^{(2), e} = 0$. Thus, we have, by a scaling argument again, 
\begin{equation}\label{eq:gEFEst5}
    \lnorm{\CURL \, \widetilde{\bg}_{E,F}^{(2), e}}{T_{E_e^i}^j} \le C \lnorm{\CURL \, \widehat{\bg}_{E,F}^{(2), e}}{T_{E_e^i}^j}, \qquad j = 1, 2, 3, 4.
\end{equation}
Additionally, the construction of $\widehat{\bg}_{E,F}^{(2), e}$, \eqref{eq:MinimumEnergyE}, a scaling argument, and an inverse estimate give the estimate
\begin{equation}\label{eq:gEFEst6}
    a_{T_{E_e^i}^j}(\widehat{\bg}_{E,F}^{(2), e}, \widehat{\bg}_{E,F}^{(2), e}) \le \dfrac{C}{h^2}\lnorm{ q}{T_{E_e^i}^j}^2, \qquad j = 1, 2, 3, 4.
\end{equation}
By summing over all $E \in \cE_H$, $i$, and $e\in\cE_h^{F_E^i}$ and by \eqref{eq:gEFEst3}, \eqref{eq:gEFEst4}, \eqref{eq:gEFEst5}, \eqref{eq:gEFEst6}, and Cauchy-Schwarz inequality, we have
\begin{equation}\label{eq:gEFEstAll2}
    \begin{aligned}
        \sum_{E \in \cE_H} a\left(\widetilde{\bg}_{E, F}^{(2)}, \widetilde{\bg}_{E, F}^{(2)} \right) 
        & \le C \sum_{E \in \cE_H} \sum_{i=1}^4 \sum_{e \in \cE_h^{F_E^i}} a\left(\widetilde{\bg}_{E,F}^{(2), e}, \widetilde{\bg}_{E,F}^{(2), e}\right)  \\
        & \le \dfrac{C}{h^2}\lnorm{q}{\Omega}^2 \le C\lnorm{\bw}{\Omega}^2 \le C a(\bw, \bw).
    \end{aligned}
\end{equation}
Putting all together with \eqref{eq:MinimumEnergyE}, \eqref{eq:gEFEstAll1}, \eqref{eq:gEFEstAll2}, and Cauchy-Schwarz inequality, we obtain
\begin{equation}\label{eq:gEFEstAll3}
    \sum_{E \in \cE_H} a(\bg_{E, F}, \bg_{E, F}) \le C a(\bw, \bw).
\end{equation}

We finally consider the missing piece $\bw_{E, E} = \br_{E, E} + \bg_{E, E}$. Based on the decomposition \eqref{eq:coarse edge decomposition} in Remark~\ref{rem:coarse edge decomposition} and the fact that $\bw \in (I - P_H)N_h$, $\lambda_E(\bw_{E,E}) = 0$ and we then have
\begin{equation}
    \bw \cdot \bt_e = \bw_{E, E} \cdot \bt_e = \nabla \, p_E \cdot \bt_e,
\end{equation}
on $E$ for some $p_E \in W_h$. We note that there is only one degree of freedom for $p_E$.

For $v_E$, there are six edges $\lbrace e_i \rbrace, i = 1, \cdots, 6$ in $\cE_h$ that share $v_E$. We construct $\widehat{\bw}_{E, E}\in N_h^E$ in the following way:
\begin{equation}
    \widehat{\bw}_{E, E} \cdot \bt_e =
        \nabla \, p_E \cdot \bt_e \qquad \mbox{ for } e = e_i, i = 1, \cdots, 6\\
\end{equation}
and \eqref{eq:edge space}. Since $\lnorm{\widehat{\bw}_{E, E}}{T_E^i} = 0$ if and only if $\bw_{E, E} = 0$, by a scaling argument, we have
\begin{equation}\label{eq:wEEEst1}
    \lnorm{\bw_{E, E}}{T_E^i}^2 \le C \lnorm{\widehat{\bw}_{E, E}}{T_E^i}^2, \qquad i = 1, 2, 3, 4.
\end{equation}
Moreover, from Lemma~\ref{lem:coarse}, $\CURL \, \widehat{\bw}_{E, E} = 0$ if and only if $\bw_{E, E} = 0$. We therefore have
\begin{equation}\label{eq:wEEEst2}
    \lnorm{\CURL \, \bw_{E, E}}{T_E^i}^2 \le C \lnorm{\CURL \, \widehat{\bw}_{E, E}}{T_E^i}^2, \qquad i = 1, 2, 3, 4.
\end{equation}
In addition, by the construction of $\widehat{\bw}_{E, E}$, \eqref{eq:MinimumEnergyE}, we have
\begin{equation}\label{eq:wEEEst3}
    a_{T_E^i}(\widehat{\bw}_{E, E}, \widehat{\bw}_{E, E}) \le \lnorm{\nabla \, p_E}{T_E^i}^2, \qquad i = 1, 2, 3, 4. 
\end{equation}
Thus, we obtain by \eqref{eq:wEEEst1}, \eqref{eq:wEEEst2}, \eqref{eq:wEEEst3}, and a scaling argument,
\begin{equation}\label{eq:wEEEstAll}
    \begin{aligned}
        \sum_{E\in\cE_H} a(\bw_{E, E}, \bw_{E, E}) & \le C \sum_{E \in \cE_H} a(\widehat{\bw}_{E, E}, \widehat{\bw}_{E, E}) \\
        & \le C \lnorm{\nabla \, p_E}{\Omega}^2 \le C \lnorm{\bw}{\Omega}^2 \le C a(\bw, \bw).
    \end{aligned}
\end{equation}
With $\bw_T = \br_T + \bg_T$ and $\bw_E = \br_{E, F} + \bg_{E, F} + \bw_{E, E}$, we have the estimate \eqref{eq:StabilityEstimateE} by \eqref{eq:bEstAll}, \eqref{eq:gTEstE}, \eqref{eq:gEFEstAll3},  and \eqref{eq:wEEEstAll}.
\end{proof}

The following lemma shows a stability estimate for the vertex-based method:
\begin{lem}\label{lem:StabilityV}
For any $\bw \in \left( I - P_H \right) N_h$, we can find a decomposition
\begin{equation*}
    \bw = \sum_{T \in \cT_H} \bw_T + \sum_{V \in \cV_H} \bw_V
\end{equation*}
and a constant $C_{V,\dag}$ that does not depend on $\alpha$, $h$  and the number of elements in $\cT_H$, such that
\begin{equation}\label{eq:StabilityEstimateV}
    \sum_{T \in \cT_H} a \left( \bw_T , \bw_T \right) + \sum_{V \in \cV_H} a \left( \bw_V , \bw_V \right) \le C_{V,\dag} a\left( \bw, \bw \right).
\end{equation}
\end{lem}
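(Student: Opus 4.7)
The plan is to mirror the proof of Lemma~\ref{lem:StabilityE}, replacing each edge patch by a vertex patch throughout. Starting from the discrete Helmholtz decomposition $\bw = \br + \nabla q$ of Lemma~\ref{lem:Helmholtz Decomposition}, I build the target decomposition
\begin{equation*}
\bw = \sum_{T\in\cT_H} \bw_T + \sum_{V\in\cV_H} \bw_V,\qquad \bw_T\in N_h^T,\ \bw_V\in N_h^V,
\end{equation*}
piece by piece: the interior contributions $\bw_T$ absorb the fine-edge DOFs lying strictly inside each coarse element, while the vertex contributions $\bw_V$ absorb the remaining DOFs on the $12$ faces and $6$ coarse edges of the patch around $V$, after distribution among adjacent vertex patches with weights summing to one.

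For the curl-free component $\br$, the interior pieces $\br_T$ are handled exactly as in \eqref{eq:rTEstE}, using the $a$-orthogonality of the interior spaces with each $N_h^V$ and with each other, combined with \eqref{eq:HD2} and the inverse inequality $\|\CURL\br\|_T^2\le Ch^{-2}\|\br\|_T^2$. To bound $\br_V$ I introduce an auxiliary field $\widetilde{\br}_V\in N_h$ supported in $\bigcup_{i=1}^8 T_V^i$ whose tangential DOFs on $\bigcup_{i}\partial T_V^i$ agree with those of $\br_V$, then invoke the minimum-energy property \eqref{eq:MinimumEnergyV}, a scaling estimate $\|\widetilde{\br}_V\|_{T_V^i}\le C\|\br\|_{T_V^i}$, and an inverse inequality to obtain $\sum_V a(\br_V,\br_V)\le C a(\bw,\bw)$, in strict parallel with \eqref{eq:rEEstE}.

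For $\nabla q$ I follow the two-step recipe of the edge-based proof. The gradient piece $\widetilde{\bg}_V^{(1)}$ is defined as the gradient of a weighted sum of nodal hat functions at the centers of the $12$ faces and $6$ coarse edges of the vertex patch, with weights $1/\cN_\star$ reflecting the multiplicity with which these fine vertices are shared among adjacent vertex patches; the inverse inequality gives $\|\widetilde{\bg}_V^{(1)}\|_{T_V^i}^2\le Ch^{-2}\|q\|_{T_V^i}^2$, and summation over $V$ together with \eqref{eq:HD3} yields the vertex analogue of \eqref{eq:gEFEstAll1}. A correction $\widetilde{\bg}_V^{(2)}\in N_h^V$ constructed edge by edge as in \eqref{eq:gEFEst3} handles the remainder and is controlled by the same scaling argument combined with \eqref{eq:MinimumEnergyV}. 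The residual tangential trace on the six coarse edges through $V$ is treated as in \eqref{eq:wEEEstAll}: by Remark~\ref{rem:coarse edge decomposition} and $\bw\in(I-P_H)N_h$ this trace equals $\nabla p_{E_V^i}\cdot\bt_e$ for some $p_{E_V^i}\in W_h$, which I lift to $\widehat{\bw}_V\in N_h^V$ and bound via a vertex-patch analogue of Lemma~\ref{lem:coarse} plus an inverse inequality and \eqref{eq:HD3}.

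The principal obstacle is the combinatorial bookkeeping, which is heavier than in the edge-based case: each vertex patch contains $8$ elements, $12$ faces, and $6$ coarse edges, so the weights $1/\cN_\star$ used to distribute each fine-edge DOF among adjacent vertex patches must be chosen so that every DOF is counted exactly once while the overlap multiplicity in $\sum_{V\in\cV_H}$ remains bounded by an absolute constant. A secondary technical point is the verification of the vertex-patch analogue of Lemma~\ref{lem:coarse}: one must show that the lift $\widehat{\bw}_V\in N_h^V$ has nonvanishing curl precisely when its tangential data at $v_V$ is nonzero, so that the scaling-based equivalences underlying the analogues of \eqref{eq:wEEEst2} and \eqref{eq:gEFEst5} hold with constants independent of $h$, $H$, $\alpha$, and the number of coarse elements.
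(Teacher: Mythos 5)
Your overall architecture (split $\bw=\br+\nabla q$ via Lemma~\ref{lem:Helmholtz Decomposition}, peel off interior pieces, distribute the skeleton DOFs over vertex patches with weights summing to one, and control each local piece by the minimum-energy property \eqref{eq:MinimumEnergyV} plus scaling and inverse estimates) matches the paper, and your treatment of $\br$ is essentially identical to the paper's. Where you diverge is the gradient part: you transplant the full edge-based machinery --- the two-step splitting $\widetilde{\bg}_V^{(1)}+\widetilde{\bg}_V^{(2)}$, the residual coarse-edge trace handled through Remark~\ref{rem:coarse edge decomposition}, and above all a ``vertex-patch analogue of Lemma~\ref{lem:coarse}'' --- whereas the paper's proof of the vertex case needs none of this. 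The reason is a small but decisive change of viewpoint: instead of distributing the \emph{edge DOFs} of $\nabla q$ over adjacent patches (which, as in the edge case, forces you to realize inside $N_h^V$ the contribution of hat functions centered at points \emph{outside} the patch skeleton, and hence to prove a non-degeneracy/curl-control lemma for the resulting discrete-harmonic extensions), the paper distributes the \emph{nodal values} of $q$: it sets $\widetilde{\bg}_V=\nabla\bigl(\nu_V(q)\psi_V+\sum_i\frac{1}{\cN_{F_V^i}}\nu_{v_{F_V^i}}(q)\psi_{v_{F_V^i}}+\sum_j\frac{1}{\cN_{E_V^j}}\nu_{v_{E_V^j}}(q)\psi_{v_{E_V^j}}\bigr)$. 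Every hat function appearing here is centered at a point of the vertex-patch skeleton (including $V$ itself, with weight one, which is why no leftover ``$p_E$-type'' trace survives), so $\widetilde{\bg}_V$ is a genuine local gradient supported in $\cup_i T_V^i$, the weighted sums telescope to $\nabla q$ on the coarse skeleton, and \eqref{eq:MinimumEnergyV} gives directly $a(\bg_V,\bg_V)\le a(\widetilde{\bg}_V,\widetilde{\bg}_V)=\sum_i\lnorm{\widetilde{\bg}_V}{T_V^i}^2\le Ch^{-2}\lnorm{q}{\cup_i T_V^i}^2$, which sums to $Ca(\bw,\bw)$ by \eqref{eq:HD3}.

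The one point I would press you on is that your route, as written, is not yet a proof: the scaling equivalences underlying your analogues of \eqref{eq:gEFEst5} and \eqref{eq:wEEEst2} stand or fall with the unverified vertex-patch analogue of Lemma~\ref{lem:coarse} (that the $N_h^V$-extension of the relevant six-edge tangential data has nonvanishing curl exactly when the data is nonzero, with $h$-uniform constants). That statement is plausible and could be checked by the same direct computation as Lemma~\ref{lem:coarse}, but you should either carry it out or, better, switch to the nodal-value distribution above, which eliminates the need for it and shortens the gradient part of the argument to a few lines.
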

\begin{proof}
We will consider $\br$ and $\nabla \, q$ in \eqref{eq:HD} separately as in the approach for Lemma~\ref{lem:StabilityE}. 
For each $V \in \cV_H$, we consider the geometric structures $\left \lbrace T_V^i\right \rbrace_{i = 1, \cdots, 8}$ in $\cT_H$, twelve faces, $\left \lbrace F_V^i\right \rbrace_{i = 1, \cdots, 12}$ in $\cF_H$, and six edges, $\left \lbrace E_V^i\right \rbrace_{i = 1, \cdots, 6}$ in $\cE_H$, considered in Section~\ref{subsubsec:vertex-based}. The numbers $\cN_{F_V^i}$ and $\cN_{E_V^j}$ are denoted by the numbers of vertices in $\cV_H$ that are parts of $\p F_V^i$ and $\p E_V^j$, respectively. We now construct $\br_V \in N_h^V$ in the following way:
\begin{equation}
    \br_V \cdot \bt_e
    = \begin{cases}
        \dfrac{1}{\cN_{F_V^i}} \br \cdot \bt_e & \mbox{ for } e \in \cE_h^{F_V^i}, i = 1, \cdots, 12,\\
        \dfrac{1}{\cN_{E_V^j}} \br \cdot \bt_e & \mbox{ for } e \in \cE_h^{E_V^j}, i = 1, \cdots, 6,
    \end{cases}
\end{equation}
and \eqref{eq:vertex space}. We note that $\br - \sum_{V \in \cV_H} \br_V$ belongs to $\sum_{T \in \cT_H} N_h^T$ since $\br$ and $\sum_{V \in \cV_H} \br_V$ have the same degrees of freedom on the edges contained in $\p T, T \in \cT_H$. Hence, we have the following decomposition:
\begin{equation}
\br = \sum_{T \in \cT_H} \br_T + \sum_{V \in \cV_H} \br_V.
\end{equation}
Using the same arguments in \eqref{eq:rTEstE}, we have
\begin{equation}\label{eq:rTEstV}
\sum_{T \in \cT_H} a(\br_T, \br_T) \le C a(\bw, \bw).
\end{equation}
Let $\widetilde{\br}_V$ be defined by
\begin{equation}
    \widetilde{\br}_V := \sum_{i = 1}^{12}\sum_{e \in \cE_h^{F_V^i}} \lambda_e(\br_V) \phi_e + \sum_{j=1}^{6} \sum_{e \in \cE_h^{E_V^j}} \lambda_e(\br_V) \phi_e.
\end{equation}
We then have
\begin{equation}\label{eq:brRelationV}
    a(\br_V, \br_V) \le a(\widetilde{\br}_V, \widetilde{\br}_V)
\end{equation}
and
\begin{equation}\label{eq:tbrL2Est V}
    \lnorm{\widetilde{\br}_V}{T_V^i} \le C \lnorm{\br}{T_V^i}, \qquad i = 1, \cdots, 8
\end{equation}
by \eqref{eq:MinimumEnergyV} and a standard scaling argument.
Combining \eqref{eq:HD1}, \eqref{eq:HD2}, \eqref{eq:brRelationV}, \eqref{eq:tbrL2Est V}, and an inverse estimate, we obtain 
\begin{equation}\label{eq:rVEstAll}
    \begin{aligned}
    \sum_{V \in \cV_H} a\left( \br_V , \br_V \right) & \le \sum_{V \in \cV_H} a(\widetilde{\br}_V, \widetilde{\br}_V) \\
    & = \sum_{V \in \cV_H} \sum_{i=1}^8 \left[ \alpha \lnorm{\CURL \widetilde{\br}_V}{T_V^i}^2 + \lnorm{\widetilde{\br}_V}{T_V^i}^2\right]\\
    & \le \sum_{V \in \cV_H} \sum_{i=1}^8 C \left[ \dfrac{\alpha}{h^2} \lnorm{\widetilde{\br}_V}{T_V^i}^2 + \lnorm{\widetilde{\br}_V}{T_V^i}^2\right] \\
    & \le \sum_{V \in \cV_H} \sum_{i=1}^8 C \left[ \dfrac{\alpha}{h^2} \lnorm{\br}{T_V^i}^2 + \lnorm{\br}{T_V^i}^2\right] \le C a(\bw, \bw).
    \end{aligned}
\end{equation}
Together with \eqref{eq:rTEstV} and \eqref{eq:rVEstAll}, we have
\begin{equation}\label{eq:rEstAll}
    \sum_{T \in \cT_H} a(\br_T, \br_T) + \sum_{V \in \cV_H} a\left( \br_V , \br_V \right) \le C a(\bw, \bw).
\end{equation}
Next, we consider $\bg = \nabla \, q$. 

Let $\widetilde{\bg}_V$ be defined by 
\begin{equation}
\widetilde{\bg}_V := \nabla \, \left( \nu_V(q) \psi_V + \sum_{i=1}^{12} \sum_{v \in \cV_h^{F_V^i} }\dfrac{1}{\cN_{F_V^i}}\nu_v(q)\psi_v + \sum_{j=1}^6 \sum_{v \in \cV_h^{E_V^j}}\dfrac{1}{\cN_{E_V^j}} \nu_v(q) \psi_v \right).
\end{equation}
Using a standard inverse estimate and a scaling argument, we obtain
\begin{equation}\label{eq:gVinverse}
\lnorm{\widetilde{\bg}_V}{T}^2 \le \dfrac{C}{h^2} \lnorm{q}{T}^2 \qquad \forall T \in \cT_H.
\end{equation}
We then construct $\bg_V \in N_h^V$ so that
\begin{equation}
    \bg_V \cdot \bt_e = \widetilde{\bg}_V \cdot \bt_e \mbox{ for } e \in \left(\bigcup_{i=1}^{12}\cE_h^{F_V^i}\right) \bigcup \left(\bigcup_{j=1}^6 \cE_h^{E_V^j} \right).
\end{equation}
Now that $\bg$ and $\sum_{V \in \cV_H} \bg_V$ have the identical degrees of freedom on the edges in $\bigcup_{T \in \cT_H}\cE_h^{\p T}$, we have
\begin{equation}
    \bg = \sum_{T \in \cT_H} \bg_T + \sum_{V \in \cV_H} \bg_V
\end{equation}
for unique vector fields $\bg_T \in N_h^T$.
For $\bg_T$,  approach with \eqref{eq:gTEstE} to obtain
\begin{equation}\label{eq:gTEstV}
\sum_{T \in \cT_H} a(\bg_T, \bg_T) \le C a(\bw, \bw).
\end{equation}
By the construction of $\bg_V$ and \eqref{eq:MinimumEnergyV}, we obtain
\begin{equation}\label{eq:gVEst1}
    a(\bg_V, \bg_V) \le a(\widetilde{\bg}_V, \widetilde{\bg}_V) = \sum_{i=1}^8 \lnorm{\widetilde{\bg}_V}{T_V^i}^2.
\end{equation}
Moreover, we have the following estimate using \eqref{eq:HD3}, \eqref{eq:gVinverse}, and \eqref{eq:gVEst1}:
\begin{equation}\label{eq:gVEstV}
\sum_{V\in\cV_H} a(\bg_V, \bg_V) \le \dfrac{C}{h^2}\lnorm{q}{\Omega}^2 \le C\lnorm{\bw}{\Omega}^2 \le C a(\bw, \bw).
\end{equation}
From \eqref{eq:gTEstV} and \eqref{eq:gVEstV}, we therefore have
\begin{equation}\label{eq:gEstAll}
    \sum_{T \in \cT_H} a(\bg_T, \bg_T) + \sum_{V \in \cV_H} a\left( \bg_V , \bg_V \right) \le C a(\bw, \bw).
\end{equation}
With $\bw_T = \br_T + \bg_T$ and $\bw_V = \br_V + \bg_V$, we obtain the desired estimate \eqref{eq:StabilityEstimateV} from \eqref{eq:rEstAll} and \eqref{eq:gEstAll}.
\end{proof}

\subsection{Convergence analysis of the V--cycle multigrid algorithms}
We now consider the convergence analysis for the V--cycle multigrid. The error propagation operator $E_k : N_k \longrightarrow N_k$ for the V--cycle multigrid methods with $m$ smoothing steps is given by 
\begin{equation}\label{eq:VCycleRecursion}
    E_k = 
    \begin{cases}
    0 & \mbox{ if } k = 0, \\
    E_k=R_k^m\left(Id_k-I_{k-1}^kP_k^{k-1}\right)R_k^m+R_k^m\left(I_{k-1}^k E_{k-1} P_k^{k-1}\right)R_k^m & \mbox{ if } k\geq 1;
    \end{cases}
\end{equation}
see \cite{Hackbusch:1985:MMA,McCormick:1987:SIAMBook}.
Here, $I_{k-1}^k$ is defined in Section~\ref{subsec:grid} and the operator $P_{k}^{k-1} : N_k \longrightarrow N_{k-1}$ is the Ritz projection operator defined by 
\begin{equation}\label{eq:RitzProjection}
    a\left(P_k^{k-1}\bw,\bv\right)=a\left(\bw,I_{k-1}^k\bv\right)\qquad\forall\,\bw\in N_k,\,\bv\in N_{k-1}.
\end{equation}
Moreover, we define $R_k : N_k \longrightarrow N_k$ by 
\begin{equation}\label{eq:RkDef}
    R_k=Id_k- M_k^{-1}A_k,
\end{equation} 
where $Id_k$ is the identity operator on $N_k$.
\begin{rem}\label{lem:symmetryRkEk}
The operator $R_k$ in \eqref{eq:RkDef} is symmetric with respect to the inner product $a(\cdot, \cdot)$ and $E_k$ is symmetric positive semidefinite with respect to $a(\cdot, \cdot)$. For more detail, see Chapter 6 of \cite{BScott:2008:FEM}.
\end{rem}

We will follow the framework in Bramble and Pasciak \cite{BP:1987:NCE}. We can also refer to Chapter 6 of \cite{BScott:2008:FEM}. We note that the spectral conditions in Section~\ref{subsubsec:edge-based} and Section~\ref{subsubsec:vertex-based} and stability estimates in Lemma~\ref{lem:StabilityE} and Lemma~\ref{lem:StabilityV} play main roles in the framework.

We first consider a smoothing property.
\begin{lem}\label{lem:Smoothing}
For $m \ge 1$, we have
  \begin{equation*}
    a\left(\left(Id_k-R_k\right)R_k^m\bv,R_k^m\bv\right)\leq \frac{1}{2m}a\left(\left(Id_k-R_k^{2m}\right)\bv,\bv\right)
    \qquad\forall\,\bv\in N_k,\;k\geq1.
  \end{equation*}
\end{lem}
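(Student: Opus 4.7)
The plan is to reduce the operator inequality to a scalar inequality by simultaneously diagonalizing $R_k$ and using the spectral information already collected in Sections~\ref{subsubsec:edge-based} and~\ref{subsubsec:vertex-based}. By the choice of the damping factors $\eta_E \le 1/12$ or $\eta_V \le 1/8$, we have $\rho(M_k^{-1}A_k) \le 1$, which together with the positivity of $M_k^{-1}$ and $A_k$ means that $R_k = Id_k - M_k^{-1}A_k$ is $a$-symmetric (Remark~\ref{lem:symmetryRkEk}) with spectrum contained in $[0,1]$. Therefore there exists an $a$-orthonormal basis $\{\phi_i\}$ of $N_k$ with $R_k \phi_i = \lambda_i \phi_i$ for some $\lambda_i \in [0,1]$.

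Write $\bv = \sum_i c_i \phi_i$. A direct computation gives
\begin{equation*}
a\bigl((Id_k - R_k)R_k^m \bv, R_k^m \bv\bigr) = \sum_i c_i^2 (1-\lambda_i)\lambda_i^{2m},
\end{equation*}
and
\begin{equation*}
a\bigl((Id_k - R_k^{2m})\bv,\bv\bigr) = \sum_i c_i^2 (1-\lambda_i^{2m}).
\end{equation*}
Hence it suffices to verify the pointwise scalar inequality
\begin{equation*}
(1-\lambda)\lambda^{2m} \le \frac{1}{2m}(1-\lambda^{2m}) \qquad \forall\, \lambda \in [0,1].
\end{equation*}

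For this, I would use the telescoping identity $1-\lambda^{2m} = (1-\lambda)\sum_{j=0}^{2m-1}\lambda^j$. The case $\lambda = 1$ is trivial since both sides vanish, so for $\lambda \in [0,1)$ the inequality is equivalent to
\begin{equation*}
2m\, \lambda^{2m} \le \sum_{j=0}^{2m-1}\lambda^j.
\end{equation*}
Since $\lambda \in [0,1]$ implies $\lambda^j \ge \lambda^{2m}$ for every $j \in \{0,1,\dots,2m-1\}$, the sum on the right is bounded below by $2m \lambda^{2m}$, establishing the claim. This pointwise bound propagates back through the spectral expansion and yields the lemma.

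There is no real obstacle here beyond identifying that the damping choices guarantee the spectrum of $R_k$ sits in $[0,1]$, which lets the $a$-symmetry of $R_k$ be exploited through simultaneous diagonalization; the remainder is a clean scalar estimate.
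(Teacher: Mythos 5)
Your proof is correct and follows essentially the same route as the paper: the paper also invokes the $a$-symmetry of $R_k$ and its spectrum in $[0,1]$, establishes the monotonicity $a\left(\left(Id_k-R_k\right)R_k^l\bv,\bv\right)\leq a\left(\left(Id_k-R_k\right)R_k^j\bv,\bv\right)$ for $j\le l$ (the operator form of your scalar bound $\lambda^{2m}\le\lambda^j$), and then telescopes $\sum_{j=0}^{2m-1}(Id_k-R_k)R_k^j=Id_k-R_k^{2m}$. You simply make the spectral diagonalization explicit rather than phrasing it at the operator level.
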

\begin{proof}
Let $\bv \in N_k$ be arbitrary. Since $R_k$ is symmetric with respect to the inner product $a(\cdot,\cdot)$,
it follows from the spectral conditions in Section~\ref{subsubsec:edge-based} and Section~\ref{subsubsec:vertex-based} and the spectral theorem that 
\begin{equation*}
    a\left(\left(Id_k-R_k\right)R_k^l\bv,\bv\right)\leq a\left(\left(Id_k-R_k\right)R_k^j \bv,\bv\right)\qquad\text{for}\;0\leq j\leq l,
\end{equation*}
and thus we have
\begin{align*}
(2m)a\left(\left(Id_k-R_k\right)R_k^m\bv,R_k^m\bv\right)&=(2m)a\left(\left(Id_k-R_k\right)R_k^{2m}\bv,\bv\right)\\
&\leq \sum_{j=0}^{2m-1} a\left(\left(Id_k-R_k\right)R_k^j\bv,\bv\right)
=a\left(\left(Id_k-R_k^{2m}\right)\bv,\bv\right).
\end{align*}
\end{proof}
We next derive two approximation properties.

\begin{lem}\label{lem:ApproximationI}
For all $\bv \in N_k$ and $k \ge 1$, let $\bw = \left(Id_k-I_{k-1}^kP_k^{k-1}\right)\bv$.
We then have the following estimates:
\begin{equation*}
    \left\langle M_{E,k} \bw,\bw\right\rangle
    \leq \frac{C_{E,\dag}}{\eta_E} a\left(\bw,\bw\right)
\end{equation*}
and
\begin{equation*}
    \left\langle M_{V,k} \bw, \bw\right\rangle
    \leq \frac{C_{V,\dag}}{\eta_V} a\left(\bw,\bw\right).
\end{equation*}
\end{lem}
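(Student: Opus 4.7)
The plan is to combine the additive Schwarz characterization of $M_{E,k}$ and $M_{V,k}$ with the stability estimates from Lemma~\ref{lem:StabilityE} and Lemma~\ref{lem:StabilityV}. First I would verify that $\bw = (Id_k - I_{k-1}^k P_k^{k-1})\bv$ lies in $(I - P_H)N_h$, which is the setting required by the stability lemmas. This follows because $P_k^{k-1} I_{k-1}^k = Id_{k-1}$ (a direct consequence of definition \eqref{eq:RitzProjection} together with the fact that $I_{k-1}^k$ is the natural injection), so $P_k^{k-1}\bw = P_k^{k-1}\bv - P_k^{k-1}\bv = 0$.

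Next I would invoke the standard additive Schwarz (Lions) identity: for any preconditioner of the form $M^{-1} = \eta \sum_i J_i A_i^{-1} J_i^t$ built from local solvers whose bilinear forms are $a$ restricted to the subspaces, one has
\begin{equation*}
\langle M\bw, \bw\rangle = \frac{1}{\eta}\inf\left\{\sum_i a(\bw_i,\bw_i) : \bw = \sum_i J_i \bw_i\right\}.
\end{equation*}
Applied to $M_{E,k}$, whose subspaces are the interior spaces $N_h^T$ and the edge spaces $N_h^E$ (on which $A_T$ and $A_E$ are the restrictions of $a$), this gives
\begin{equation*}
\langle M_{E,k}\bw,\bw\rangle = \frac{1}{\eta_E}\inf\left\{\sum_{T \in \cT_H} a(\bw_T,\bw_T) + \sum_{E \in \cE_H} a(\bw_E,\bw_E) : \bw = \sum_T \bw_T + \sum_E \bw_E\right\}.
\end{equation*}

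Finally, I would take the particular decomposition provided by Lemma~\ref{lem:StabilityE}, which exists because $\bw \in (I-P_H)N_h$, and plug it into the infimum above:
\begin{equation*}
\langle M_{E,k}\bw,\bw\rangle \le \frac{1}{\eta_E}\left(\sum_{T} a(\bw_T,\bw_T) + \sum_{E} a(\bw_E,\bw_E)\right) \le \frac{C_{E,\dag}}{\eta_E}\,a(\bw,\bw),
\end{equation*}
which is the first claimed estimate. The second estimate is obtained by repeating this argument verbatim, replacing the edge spaces $N_h^E$ by the vertex spaces $N_h^V$ and using Lemma~\ref{lem:StabilityV} in place of Lemma~\ref{lem:StabilityE}.

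I do not expect a real obstacle here; the essential analytic work has already been carried out in Lemmas~\ref{lem:StabilityE} and \ref{lem:StabilityV}. The one subtlety worth stating carefully in the write-up is the Schwarz characterization of $\langle M_{\bullet,k}\bw,\bw\rangle$ as an infimum over decompositions, since the factor $\eta_E^{-1}$ (resp.\ $\eta_V^{-1}$) in the bound comes precisely from the damping factor in \eqref{eq:Preconditioner E} (resp.\ \eqref{eq:Preconditioner P}).
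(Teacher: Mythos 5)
Your proposal is correct and follows essentially the same route as the paper: the additive Schwarz infimum characterization of $\left\langle M_{\bullet,k}\bw,\bw\right\rangle$ (the paper's \eqref{eq:FundamentalE} and \eqref{eq:FundamentalV}) combined with the decompositions of Lemma~\ref{lem:StabilityE} and Lemma~\ref{lem:StabilityV}. Your explicit check that $\bw=\left(Id_k-I_{k-1}^kP_k^{k-1}\right)\bv$ lies in $\left(I-P_H\right)N_h$ is a welcome detail that the paper leaves implicit.
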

\begin{proof}
We will use a well-know additive Schwarz theory. For more details, see Chapter 7 of \cite{BScott:2008:FEM}.
For any $\bw\in N_h$, we have
\begin{equation}\label{eq:FundamentalE}
    \left\langle M_{E,k} \bw,\bw \right\rangle=\eta_E^{-1}\inf_{\substack{\bw=\sum_{T\in\cT_H}\bw_T+\sum_{E\in\cE_H}\bw_E\\
            \bw_T\in N_h^T,\,\bw_E\in N_h^E}}
    \left(\sum_{T\in\cT_H}a\left(\bw_T,\bw_T\right)+\sum_{E\in\cE_H}a\left(\bw_E,\bw_E\right)\right).
\end{equation}
We therefore have the estimate for $M_{E,k}$ from Lemma~\ref{lem:StabilityE} and \eqref{eq:FundamentalE} with $\bw = \left(Id_k-I_{k-1}^kP_k^{k-1}\right)\bv$.

Similarly, for any $\bw \in N_h$, the following relation holds:
\begin{equation}\label{eq:FundamentalV}
    \left\langle M_{V,k} \bw,\bw\right\rangle=\eta_V^{-1}\inf_{\substack{\bw=\sum_{T\in\cT_H}\bw_T+\sum_{V\in\cV_H}\bw_V\\
            \bw_T\in N_h^T,\,\bw_V\in N_h^V}}
    \left(\sum_{T\in\cT_H}a\left(\bw_T,\bw_T\right)+\sum_{V\in\cV_H}a\left(\bw_V,\bw_V\right)\right).
\end{equation}
In a similar way, we obtain the estimate for $M_{V,k}$ from Lemma~\ref{lem:StabilityV} and \eqref{eq:FundamentalV} with $\bw = \left(Id_k-I_{k-1}^kP_k^{k-1}\right)\bv$.
\end{proof}

\begin{lem}\label{lem:ApproximationII}
We have
\begin{equation*}
    a\left(\left(Id_k-I_{k-1}^kP_k^{k-1}\right)\bv,\left(Id_k-I_{k-1}^kP_k^{k-1}\right)\bv\right)
    \leq \frac{C_\dag}{\eta}a\left(\left(Id_k-R_k)\bv,\bv\right)\right)
    \qquad\forall\,\bv\in N_k,\,k\geq 1,
\end{equation*}
where $C_\dag = C_{E,\dag}$ (resp. $C_{V,\dag}$) and $\eta = \eta_E$ 
(resp. $\eta_V$) if $M_k = M_{E,k}$ (resp. $M_{V,k}$).
\end{lem}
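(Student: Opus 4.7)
The plan is to reduce this approximation property to Lemma~\ref{lem:ApproximationI} via a Cauchy--Schwarz argument in the $M_k$ inner product, exploiting the $a$-orthogonality built into the Ritz projection $I_{k-1}^k P_k^{k-1}$.

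First I would set $\bw = (Id_k - I_{k-1}^k P_k^{k-1})\bv$ and observe that by the definition \eqref{eq:RitzProjection}, $I_{k-1}^k P_k^{k-1}$ is the $a$-orthogonal projection onto the subspace $I_{k-1}^k N_{k-1} \subset N_k$. In particular, $a(\bw, I_{k-1}^k P_k^{k-1}\bv) = 0$, and therefore $a(\bw, \bw) = a(\bw, \bv)$. Furthermore, since $\bw \in (Id_k - I_{k-1}^k P_k^{k-1})N_k$ plays the role of $(I - P_H)N_h$ in Lemma~\ref{lem:ApproximationI}, we have the bound $\langle M_k \bw, \bw\rangle \leq (C_\dag/\eta) a(\bw, \bw)$ ready to be used (with $C_\dag = C_{E,\dag}$ or $C_{V,\dag}$ according to the choice of smoother).

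Next I would rewrite the right-hand quantity $a(\bw, \bv)$ in a form that involves $(Id_k - R_k)\bv$. From \eqref{eq:RkDef} we have $A_k = M_k(Id_k - R_k)$, so
\begin{equation*}
a(\bw, \bv) = \langle A_k \bv, \bw\rangle = \langle M_k (Id_k - R_k)\bv, \bw\rangle.
\end{equation*}
Applying the Cauchy--Schwarz inequality for the symmetric positive definite form $\langle M_k \cdot, \cdot\rangle$ yields
\begin{equation*}
\langle M_k (Id_k - R_k)\bv, \bw\rangle \leq \langle M_k (Id_k - R_k)\bv, (Id_k - R_k)\bv\rangle^{1/2}\, \langle M_k \bw, \bw\rangle^{1/2}.
\end{equation*}
The first factor simplifies using $M_k(Id_k - R_k) = A_k$ and the definition of $a$:
\begin{equation*}
\langle M_k(Id_k - R_k)\bv, (Id_k - R_k)\bv\rangle = \langle A_k\bv, (Id_k - R_k)\bv\rangle = a\bigl((Id_k - R_k)\bv, \bv\bigr).
\end{equation*}

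Finally I would combine the three ingredients. Using the bound from Lemma~\ref{lem:ApproximationI} for the second factor gives
\begin{equation*}
a(\bw, \bw) = a(\bw, \bv) \leq a\bigl((Id_k - R_k)\bv, \bv\bigr)^{1/2}\, \Bigl(\tfrac{C_\dag}{\eta}\Bigr)^{1/2} a(\bw, \bw)^{1/2},
\end{equation*}
and squaring and dividing by $a(\bw, \bw)$ (trivial if it vanishes) produces the claimed estimate. There is no real obstacle here; the argument is a standard consequence of the Ritz projection's $a$-orthogonality and the additive Schwarz identity already invoked in Lemma~\ref{lem:ApproximationI}. The only mild care needed is the identification $A_k = M_k(Id_k - R_k)$ and the symmetry of $a$ and $M_k$, which are both immediate from \eqref{eq:Preconditioner E}, \eqref{eq:Preconditioner P}, and \eqref{eq:RkDef}.
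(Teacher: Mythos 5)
Your proposal is correct and follows essentially the same route as the paper: use the $a$-orthogonality of the Ritz projection to get $a(\bw,\bw)=a(\bw,\bv)$, rewrite $A_k = M_k(Id_k-R_k)$ (the paper writes this as $M_k(M_k^{-1}A_k)$), apply Cauchy--Schwarz in the $\langle M_k\cdot,\cdot\rangle$ inner product, and close with Lemma~\ref{lem:ApproximationI}. No gaps.
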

\begin{proof}

Let $\bw = \left(Id_k-I_{k-1}^kP_k^{k-1}\right)\bv$. By \eqref{eq:RitzProjection}, Lemma~\ref{lem:ApproximationI} and the Cauchy-Schwarz inequality, we have
\begin{equation*}
\begin{aligned}
a(\bw, \bw) & = a(\bw, \bv) = \left\langle M_{k}\left(M_{k}^{-1}\right)A_k \bv, \bw \right\rangle \\
            & \le \left\langle M_{k}\left(M_{k}^{-1}A_k\right) \bv, \left(M_{k}^{-1}A_k\right) \bv \right\rangle^{1/2}
              \left\langle M_k \bw, \bw\right\rangle^{1/2} \\
            & \le a\left(\left(M_{k}^{-1}A_k\right) \bv, \bv \right)^{1/2} \left(\dfrac{C^\dag}{\eta}\right)^{1/2}a(\bw, \bw)^{1/2} \\
            & = a\left(\left(Id_k - R_k\right) \bv, \bv \right)^{1/2} \left(\dfrac{C^\dag}{\eta}\right)^{1/2}a(\bw, \bw)^{1/2}.
\end{aligned}
\end{equation*}
Hence, we obtain
\begin{equation}
a(\left(Id_k-I_{k-1}^kP_k^{k-1}\right)\bv, \left(Id_k-I_{k-1}^kP_k^{k-1}\right)\bv) \le \dfrac{C^\dag}{\eta} a\left(\left(Id_k - R_k\right) \bv, \bv \right).
\end{equation}
\end{proof}

Finally, we establish our main result, the uniform convergence of the V--cycle multigrid methods.

\begin{thm} \label{thm:Convergence} We have
\begin{equation*}
    \left\|E_k\bw\right\|_a\leq \frac{(C_\dag/\eta)}{(C_\dag/\eta)+2m}\left\|\bw\right\|_a\qquad\forall\,\bw\in N_k,\,k\geq 1,
\end{equation*}
where $C_\dag = C_{E,\dag}$ (resp. $C_{V,\dag}$) and $\eta = \eta_E$ (resp. $\eta_V$) if $M_k = M_{E,k}$ (resp. $M_{V,k}$).
\end{thm}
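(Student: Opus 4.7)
The plan is to follow the standard Bramble-Pasciak argument and induct on $k$. Set $\delta := C_\dag/\eta$ so the target bound is $\|E_k\|_a \le \delta/(\delta+2m)$. Since Remark on symmetry says $E_k$ is $a$-symmetric and positive semidefinite, it suffices to establish
\begin{equation*}
a(E_k \bw, \bw) \le \frac{\delta}{\delta+2m}\, a(\bw,\bw) \qquad \forall\,\bw \in N_k.
\end{equation*}
For $k=0$ we have $E_0 = 0$ and there is nothing to prove. For $k \ge 1$, I would plug in the recursion \eqref{eq:VCycleRecursion} and exploit the $a$-self-adjointness of $R_k$ plus the fact (by \eqref{eq:RitzProjection}) that $I_{k-1}^k$ and $P_k^{k-1}$ are $a$-adjoints, so that $I_{k-1}^k P_k^{k-1}$ is the $a$-orthogonal projection from $N_k$ onto $I_{k-1}^k N_{k-1}$.

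Setting $\bv := R_k^m \bw$ and splitting $a(E_k \bw, \bw)$ into the two pieces coming from \eqref{eq:VCycleRecursion}, the first piece becomes
\begin{equation*}
a\bigl((Id_k - I_{k-1}^k P_k^{k-1})\bv,\bv\bigr) = \bigl\|(Id_k - I_{k-1}^k P_k^{k-1})\bv\bigr\|_a^2,
\end{equation*}
by the orthogonal-projection identity, and the second piece becomes $a(E_{k-1} P_k^{k-1}\bv,P_k^{k-1}\bv)$, to which the inductive hypothesis applies. Writing $\alpha := \|(Id_k - I_{k-1}^k P_k^{k-1})\bv\|_a^2$ and using $\|\bv\|_a^2 = \|P_k^{k-1}\bv\|_a^2 + \alpha$ (again from $a$-orthogonality of the Ritz projection), the combination gives
\begin{equation*}
a(E_k\bw,\bw) \le \alpha + \tfrac{\delta}{\delta+2m}\bigl(\|\bv\|_a^2 - \alpha\bigr) = \tfrac{\delta}{\delta+2m}\|\bv\|_a^2 + \tfrac{2m}{\delta+2m}\,\alpha.
\end{equation*}

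The remaining task is to bound $\alpha$ by $\tfrac{\delta}{2m}(\|\bw\|_a^2 - \|\bv\|_a^2)$. I would chain Lemma~\ref{lem:ApproximationII} applied to $\bv = R_k^m \bw$, which gives $\alpha \le \delta\, a((Id_k - R_k)R_k^m\bw, R_k^m\bw)$, with Lemma~\ref{lem:Smoothing}, which gives $a((Id_k - R_k)R_k^m\bw, R_k^m\bw) \le \tfrac{1}{2m} a((Id_k - R_k^{2m})\bw,\bw) = \tfrac{1}{2m}(\|\bw\|_a^2 - \|R_k^m\bw\|_a^2)$. Substituting back collapses the coefficient of $\alpha$ exactly: the two $\|R_k^m\bw\|_a^2$ contributions cancel and the remainder is $\tfrac{\delta}{\delta+2m}\|\bw\|_a^2$, closing the induction.

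The nontrivial step is the bookkeeping in the recursion: making sure the induction hypothesis is applied to $P_k^{k-1}\bv$ (on the coarse space), rewriting the resulting norm in terms of $\|\bv\|_a^2 - \alpha$, and verifying that the chained constants from Lemma~\ref{lem:Smoothing} and Lemma~\ref{lem:ApproximationII} produce precisely the ratio $\delta/(\delta+2m)$ rather than something weaker. Once that algebraic identity is confirmed, the rest is routine manipulation of $a$-orthogonal projections and self-adjoint operators.
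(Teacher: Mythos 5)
Your proposal is correct and follows essentially the same route as the paper's proof: the Bramble--Pasciak induction on the recursion \eqref{eq:VCycleRecursion}, using the $a$-orthogonality of $I_{k-1}^kP_k^{k-1}$, the inductive hypothesis on the coarse term, and the chaining of Lemma~\ref{lem:ApproximationII} with Lemma~\ref{lem:Smoothing}; your bookkeeping with $\alpha=\|(Id_k-I_{k-1}^kP_k^{k-1})R_k^m\bw\|_a^2$ is just an equivalent rearrangement of the paper's $(1-\delta)$ factoring, and the constants collapse to $\delta/(\delta+2m)$ exactly as you claim. The only cosmetic issue is reusing the symbol $\alpha$, which the paper already reserves for the PDE coefficient.
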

\begin{proof}
Due to the fact that $E_k$ is symmetric positive semidefinite, it is enough to show that
\begin{equation}\label{eq:EkEst}
    a(E_k\bw,\bw)\leq \frac{C_*}{C_*+2m}a(\bw,\bw)\qquad\forall\,\bw\in V_k,\,k\geq 1,
\end{equation}
where where $C_*=C_\dag/\eta$.

We will prove \eqref{eq:EkEst} by induction. Obviously, the case for $k=0$ holds automatically since $E_0=0$. Let $\delta = C_* / (C_* + 2m)$ and assume that the estimate \eqref{eq:EkEst} is satisfied for $k - 1$. We then have
\begin{align*}
a\left( E_k \bw, \bw\right) &= a\left( R_k^m \left( Id_k -I_{k-1}^k P_k^{k-1} +I_{k-1}^k E_{k-1}P_k^{k-1}\right) R_k^m\bw,\bw\right)\\
    &\le a\left(\left(Id_k -I_{k-1}^k P_k^{k-1}\right) R_k^m \bw, \left(Id_k - I_{k-1}^k P_k^{k-1}\right) R_k^m \bw \right)\\
       & \quad + \delta a\left( P_k^{k-1} R_k^m \bw,  P_k^{k-1} R_k^m \bw\right) \\
    &=\left( 1 - \delta\right) a\left( \left(I d_k- I_{k-1}^kP_k^{k-1} \right) R_k^m \bw, \left(Id_k - I_{k-1}^k P_k^{k-1}\right)R_k^m \bw\right)\\   
     & \quad + \delta a \left(R_k^m \bw,  R_k^m \bw\right)\\
     &\leq \left(1-\delta\right)C_*a\left(\left(Id_k-R_k\right)R_k^m\bw,R_k^m\bw\right)+
     \delta a \left(  R_k^m \bw,  R_k^m \bw\right)\\
      &\leq \left(1-\delta\right)\frac{C_*}{2m}a\left(\left(Id_k-R_k^{2m}\right)\bw,\bw\right)+
     \delta a \left(R_k^m \bw,  R_k^m \bw\right)=\delta a\left(\bw,\bw\right)
\end{align*}
from the induction hypothesis, \eqref{eq:VCycleRecursion}, \eqref{eq:RitzProjection},  Lemma~\ref{lem:Smoothing} and Lemma~\ref{lem:ApproximationII}.
\end{proof}

\section{Numerical experiments}\label{sec:numerics}
In this section, we report the numerical results that support the theoretical estimates and demonstrate the performance of the V--cycle multigrid methods. We use the computational domain $\Omega = (-1, 1)^3$. As the initial triangulation $\cT_0$, we use eight identical unit cubes. 

In the first set of experiments, we carry out the $k-$th level multigrid algorithm with the edge-based smoother introduced in Section~\ref{subsec:smoothers} with $m$ smoothing steps and the damping factor $\eta_E = 1/13$. We compute the contraction numbers for $k = 1, \cdots, 4$ and $m=1, \cdots, 5$. We perform the experiments five times with the coefficient $\alpha = 0.01, 0.1 ,1.0, 10.0, 100.0$. The results are reported in Table~\ref{table:edge_based}. As we see the result, the V--cycle multigrid methods provide uniform convergence. 

In the next set of experiments, we perform similar experiments to the first set of experiments. The only differences are the smoother, the vertex-based smoother, and the damping factor $\eta_V = 1/9$. Other general settings are identical. The contraction numbers are reported in Table~\ref{table:vertex_based}. The results are compatible with our theory and the uniform convergence of the methods is observed.

\begin{table}
    \centering
    \caption{Edge Based Methods}
	{\small\begin{tabular}{c|c|c|c|c|c|c}
		\multicolumn{2}{c|}{} & $m=1$  & $m=2$  & $m=3$  & $m=4$  & $m=5$  \\ \hline
        \multirow{4}{*}{$\alpha = 0.01$}
        &$k=1$ &7.88E-01&6.27E-01&4.44E-01&3.25E-01&3.11E-01\\
        &$k=2$ &8.81E-01&7.79E-01&6.99E-01&5.90E-01&5.62E-01\\
        &$k=3$ &9.24E-01&8.56E-01&7.92E-01&7.36E-01&6.77E-01\\
        &$k=4$ &9.40E-01&8.90E-01&8.41E-01&7.98E-01&7.56E-01\\ \hline
        \multirow{4}{*}{$\alpha = 0.1$}
        &$k=1$ &8.83E-01&7.85E-01&7.03E-01&6.33E-01&5.73E-01\\
        &$k=2$ &9.30E-01&8.70E-01&8.18E-01&7.55E-01&7.25E-01\\
        &$k=3$ &9.53E-01&9.19E-01&8.88E-01&8.52E-01&8.19E-01\\
        &$k=4$ &9.72E-01&9.53E-01&9.35E-01&9.18E-01&9.01E-01\\ \hline
        \multirow{4}{*}{$\alpha = 1.0$}
        &$k=1$ &9.07E-01&8.31E-01&7.69E-01&7.19E-01&6.77E-01\\
        &$k=2$ &9.44E-01&9.17E-01&8.85E-01&8.58E-01&8.30E-01\\
        &$k=3$ &9.70E-01&9.59E-01&9.44E-01&9.30E-01&9.17E-01\\
        &$k=4$ &9.81E-01&9.72E-01&9.65E-01&9.63E-01&9.56E-01\\ \hline
        \multirow{4}{*}{$\alpha = 10.0$}
        &$k=1$ &9.09E-01&8.36E-01&7.77E-01&7.30E-01&6.91E-01\\
        &$k=2$ &9.49E-01&9.25E-01&8.97E-01&8.74E-01&8.55E-01\\
        &$k=3$ &9.72E-01&9.65E-01&9.53E-01&9.42E-01&9.33E-01\\
        &$k=4$ &9.82E-01&9.76E-01&9.73E-01&9.71E-01&9.66E-01\\ \hline
        \multirow{4}{*}{$\alpha = 100.0$}
        &$k=1$ &9.10E-01&8.37E-01&7.78E-01&7.31E-01&6.93E-01\\
        &$k=2$ &9.49E-01&9.26E-01&8.98E-01&8.76E-01&8.57E-01\\
        &$k=3$ &9.73E-01&9.66E-01&9.54E-01&9.43E-01&9.34E-01\\
        &$k=4$ &9.82E-01&9.76E-01&9.73E-01&9.72E-01&9.67E-01\\
        \hline
	\end{tabular}}
    \label{table:edge_based}
\end{table}

\begin{table}
    \centering
    \caption{Vertex Based Methods}
	{\small\begin{tabular}{c|c|c|c|c|c|c}
		\multicolumn{2}{c|}{} & $m=1$  & $m=2$  & $m=3$  & $m=4$  & $m=5$  \\ \hline
        \multirow{4}{*}{$\alpha = 0.01$}
        &$k=1$ &7.90E-01&6.24E-01&4.93E-01&3.90E-01&3.08E-01\\ 
        &$k=2$ &7.91E-01&6.26E-01&4.94E-01&3.92E-01&3.12E-01\\ 
        &$k=3$ &7.90E-01&6.24E-01&4.93E-01&3.90E-01&3.08E-01\\ 
        &$k=4$ &7.90E-01&6.25E-01&4.94E-01&3.91E-01&3.09E-01\\  \hline
        \multirow{4}{*}{$\alpha = 0.1$}
        &$k=1$ &7.90E-01&6.24E-01&4.93E-01&3.90E-01&3.08E-01\\
        &$k=2$ &7.91E-01&6.25E-01&4.94E-01&3.91E-01&3.10E-01\\ 
        &$k=3$ &7.91E-01&6.26E-01&4.95E-01&3.91E-01&3.10E-01\\ 
        &$k=4$ &7.91E-01&6.26E-01&4.95E-01&3.92E-01&3.11E-01\\  \hline
        \multirow{4}{*}{$\alpha = 1.0$}
        &$k=1$ &7.90E-01&6.24E-01&4.93E-01&3.90E-01&3.08E-01\\ 
        &$k=2$ &7.91E-01&6.26E-01&4.95E-01&3.92E-01&3.10E-01\\ 
        &$k=3$ &7.91E-01&6.26E-01&4.95E-01&3.92E-01&3.11E-01\\ 
        &$k=4$ &7.91E-01&6.26E-01&4.95E-01&3.92E-01&3.11E-01\\  \hline
        \multirow{4}{*}{$\alpha = 10.0$}
        &$k=1$ &7.90E-01&6.24E-01&4.93E-01&3.90E-01&3.08E-01\\ 
        &$k=2$ &7.91E-01&6.26E-01&4.95E-01&3.92E-01&3.10E-01\\ 
        &$k=3$ &7.91E-01&6.26E-01&4.95E-01&3.92E-01&3.11E-01\\ 
        &$k=4$ &7.91E-01&6.26E-01&4.95E-01&3.92E-01&3.11E-01\\  \hline
        \multirow{4}{*}{$\alpha = 100.0$}
        &$k=1$ &7.90E-01&6.24E-01&4.93E-01&3.90E-01&3.08E-01\\ 
        &$k=2$ &7.91E-01&6.26E-01&4.95E-01&3.92E-01&3.10E-01\\ 
        &$k=3$ &7.91E-01&6.26E-01&4.95E-01&3.92E-01&3.11E-01\\ 
        &$k=4$ &7.91E-01&6.26E-01&4.95E-01&3.92E-01&3.11E-01\\ 
        \hline
	\end{tabular}}
    \label{table:vertex_based}
\end{table}

We note that a part of implementations is based on the MFEM library; see \cite{mfem, mfemweb} for more details. The implemented codes are available at \url{https://github.com/duksoon-open/MG_ND}.

\section{Concluding remarks}\label{sec:conclusions}
In this work, new multigrid methods based on nonoverlapping domain decomposition smoothers for vector field problems posed in $\HCurl$ have been developed and analyzed. The suggested methods provide uniform convergence and the numerical experiments are consistent with the theoretical results.

There are a few challenges. In our convergence analysis, we assumed that the coefficients are constants and the domain is convex. The numerical results in \cite{Oh:MGHcurlNE} show that the V--cycle multigrid methods work well without the assumptions, i.e. constant coefficients and convex domain. Our theory can therefore be extended to coefficients with jumps or nonconvex domains. We believe that the results in \cite{HU:2021:HXJC, HP:2020:regulardecomp} would be good ingredients for establishing the stronger convergence analysis. 

\section*{Acknowledgement}
This work was supported by research fund of Chungnam National University.


\begin{thebibliography}{10}

    \bibitem{mfem}
    R.~Anderson, J.~Andrej, A.~Barker, J.~Bramwell, J.-S. Camier, J.~C.~V. Dobrev,
      Y.~Dudouit, A.~Fisher, T.~Kolev, W.~Pazner, M.~Stowell, V.~Tomov,
      I.~Akkerman, J.~Dahm, D.~Medina, and S.~Zampini.
    \newblock {MFEM}: A modular finite element library.
    \newblock {\em Computers \& Mathematics with Applications}, 81:42--74, 2021.
    
    \bibitem{AFW:1997:H(div)}
    D.~N. Arnold, R.~S. Falk, and R.~Winther.
    \newblock Preconditioning in {$H({\rm div})$} and applications.
    \newblock {\em Math. Comp.}, 66(219):957--984, 1997.
    
    \bibitem{AFW:1998:H(DIV)}
    D.~N. Arnold, R.~S. Falk, and R.~Winther.
    \newblock Multigrid preconditioning in {$H({\rm div})$} on non-convex polygons.
    \newblock {\em Comput. Appl. Math.}, 17(3):303--315, 1998.
    
    \bibitem{AFW:2000:H(div)}
    D.~N. Arnold, R.~S. Falk, and R.~Winther.
    \newblock Multigrid in {$H({\rm div})$} and {$H({\rm curl})$}.
    \newblock {\em Numer. Math.}, 85(2):197--217, 2000.
    
    \bibitem{Bossavit:2005}
    A.~Bossavit.
    \newblock Discretization of electromagnetic problems: the ``generalized finite
      differences'' approach.
    \newblock In {\em Handbook of numerical analysis. {V}ol. {XIII}}, Handb. Numer.
      Anal., XIII, pages 105--197. North-Holland, Amsterdam, 2005.
    
    \bibitem{BP:1987:NCE}
    J.~H. Bramble and J.~E. Pasciak.
    \newblock New convergence estimates for multigrid algorithms.
    \newblock {\em Math. Comp.}, 49(180):311--329, 1987.
    
    \bibitem{BO:2018:MGHdiv}
    S.~C. Brenner and D.-S. Oh.
    \newblock Multigrid methods for {$H({\rm div})$} in three dimensions with
      nonoverlapping domain decomposition smoothers.
    \newblock {\em Numer. Linear Algebra Appl.}, 25(5):e2191, 14, 2018.
    
    \bibitem{BO:2018:MGHdivNE}
    S.~C. Brenner and D.-S. Oh.
    \newblock A smoother based on nonoverlapping domain decomposition methods for
      {$H({\rm div})$} problems: a numerical study.
    \newblock In {\em Domain decomposition methods in science and engineering
      {XXIV}}, volume 125 of {\em Lect. Notes Comput. Sci. Eng.}, pages 523--531.
      Springer, Cham, 2018.
    
    \bibitem{BScott:2008:FEM}
    S.~C. Brenner and L.~R. Scott.
    \newblock {\em The mathematical theory of finite element methods}, volume~15 of
      {\em Texts in Applied Mathematics}.
    \newblock Springer, New York, third edition, 2008.
    
    \bibitem{CGP:1993:MLMFEM}
    Z.~Q. Cai, C.~I. Goldstein, and J.~E. Pasciak.
    \newblock Multilevel iteration for mixed finite element systems with penalty.
    \newblock {\em SIAM J. Sci. Comput.}, 14(5):1072--1088, 1993.
    
    \bibitem{Calvo:2015:OSHCurl}
    J.~G. Calvo.
    \newblock A two-level overlapping {S}chwarz method for {$H(\rm curl)$} in two
      dimensions with irregular subdomains.
    \newblock {\em Electron. Trans. Numer. Anal.}, 44:497--521, 2015.
    
    \bibitem{Calvo:2020:OSHCurl}
    J.~G. Calvo.
    \newblock A new coarse space for overlapping {S}chwarz algorithms for {$H(\rm
      curl)$} problems in three dimensions with irregular subdomains.
    \newblock {\em Numer. Algorithms}, 83(3):885--899, 2020.
    
    \bibitem{DW:2015:BDDCHCurl}
    C.~R. Dohrmann and O.~B. Widlund.
    \newblock A {BDDC} algorithm with deluxe scaling for three-dimensional {$H({\bf
      curl})$} problems.
    \newblock {\em Comm. Pure Appl. Math.}, 69(4):745--770, 2016.
    
    \bibitem{Hackbusch:1985:MMA}
    W.~Hackbusch.
    \newblock {\em Multigrid methods and applications}, volume~4 of {\em Springer
      Series in Computational Mathematics}.
    \newblock Springer-Verlag, Berlin, 1985.
    
    \bibitem{Hiptmair:1997:MGHDiv}
    R.~Hiptmair.
    \newblock Multigrid method for {$H({\rm div})$} in three dimensions.
    \newblock {\em Electron. Trans. Numer. Anal.}, 6(Dec.):133--152, 1997.
    \newblock Special issue on multilevel methods (Copper Mountain, CO, 1997).
    
    \bibitem{Hiptmair:1999:MGHCurl}
    R.~Hiptmair.
    \newblock Multigrid method for {M}axwell's equations.
    \newblock {\em SIAM J. Numer. Anal.}, 36(1):204--225, 1999.
    
    \bibitem{HP:2020:regulardecomp}
    R.~Hiptmair and C.~Pechstein.
    \newblock A review of regular decompositions of vector fields: continuous,
      discrete, and structure-preserving.
    \newblock In {\em Spectral and high order methods for partial differential
      equations---{ICOSAHOM} 2018}, volume 134 of {\em Lect. Notes Comput. Sci.
      Eng.}, pages 45--60. Springer, Cham, [2020] \copyright 2020.
    
    \bibitem{HT:2000:Vector}
    R.~Hiptmair and A.~Toselli.
    \newblock Overlapping and multilevel {S}chwarz methods for vector valued
      elliptic problems in three dimensions.
    \newblock In {\em Parallel solution of partial differential equations
      ({M}inneapolis, {MN}, 1997)}, volume 120 of {\em IMA Vol. Math. Appl.}, pages
      181--208. Springer, New York, 2000.
    
    \bibitem{HX:2007:HXDecomp}
    R.~Hiptmair and J.~Xu.
    \newblock Nodal auxiliary space preconditioning in {${\bf H}({\bf curl})$} and
      {${\bf H}({\rm div})$} spaces.
    \newblock {\em SIAM J. Numer. Anal.}, 45(6):2483--2509, 2007.
    
    \bibitem{HU:2021:HXJC}
    Q.~Hu.
    \newblock Convergence of the {H}iptmair-{X}u preconditioner for {M}axwell's
      equations with jump coefficients ({I}): extensions of the regular
      decomposition.
    \newblock {\em SIAM J. Numer. Anal.}, 59(5):2500--2535, 2021.
    
    \bibitem{HZ:2003:DDMaxwell}
    Q.~Hu and J.~Zou.
    \newblock A nonoverlapping domain decomposition method for {M}axwell's
      equations in three dimensions.
    \newblock {\em SIAM J. Numer. Anal.}, 41(5):1682--1708, 2003.
    
    \bibitem{KV:2009:HCurl}
    T.~V. Kolev and P.~S. Vassilevski.
    \newblock Parallel auxiliary space {AMG} for {$H({\rm curl})$} problems.
    \newblock {\em J. Comput. Math.}, 27(5):604--623, 2009.
    
    \bibitem{LWXZ:2007:RSCNSS}
    Y.-J. Lee, J.~Wu, J.~Xu, and L.~Zikatanov.
    \newblock Robust subspace correction methods for nearly singular systems.
    \newblock {\em Math. Models Methods Appl. Sci.}, 17(11):1937--1963, 2007.
    
    \bibitem{McCormick:1987:SIAMBook}
    S.~F. McCormick, editor.
    \newblock {\em Multigrid methods}, volume~3 of {\em Frontiers in Applied
      Mathematics}.
    \newblock Society for Industrial and Applied Mathematics (SIAM), Philadelphia,
      PA, 1987.
    
    \bibitem{mfemweb}
    {MFEM}: Modular finite element methods {[Software]}.
    \newblock \url{mfem.org}.
    
    \bibitem{Monk:2003:BookMaxwell}
    P.~Monk.
    \newblock {\em Finite element methods for {M}axwell's equations}.
    \newblock Numerical Mathematics and Scientific Computation. Oxford University
      Press, New York, 2003.
    
    \bibitem{Monk:1991:Maxwell}
    P.~B. Monk.
    \newblock A mixed method for approximating {M}axwell's equations.
    \newblock {\em SIAM J. Numer. Anal.}, 28(6):1610--1634, 1991.
    
    \bibitem{Nedlec:1980:FEM}
    J.-C. N\'{e}d\'{e}lec.
    \newblock Mixed finite elements in {${\bf R}^{3}$}.
    \newblock {\em Numer. Math.}, 35(3):315--341, 1980.
    
    \bibitem{Oh:MGHcurlNE}
    D.-S. Oh.
    \newblock Smoothers based on nonoverlapping domain decomposition methods for
      {$H({\rm curl})$} problems: A numerical study.
    \newblock preprint.
    
    \bibitem{Toselli:2000:OSHCurl}
    A.~Toselli.
    \newblock Overlapping {S}chwarz methods for {M}axwell's equations in three
      dimensions.
    \newblock {\em Numer. Math.}, 86(4):733--752, 2000.
    
    \bibitem{Toselli:2004:FETIDPHCurl}
    A.~Toselli.
    \newblock Domain decomposition methods of dual-primal {FETI} type for edge
      element approximations in three dimensions.
    \newblock {\em C. R. Math. Acad. Sci. Paris}, 339(9):673--678, 2004.
    
    \bibitem{Zampini:2017:BDDCHCurl}
    S.~Zampini.
    \newblock Adaptive {BDDC} deluxe methods for {$\rm H(curl)$}.
    \newblock In {\em Domain decomposition methods in science and engineering
      {XXIII}}, volume 116 of {\em Lect. Notes Comput. Sci. Eng.}, pages 285--292.
      Springer, Cham, 2017.
    
    \bibitem{Zampini:2017:ABDDC}
    S.~Zampini.
    \newblock Adaptive {BDDC} deluxe methods for {$\rm H(curl)$}.
    \newblock In {\em Domain decomposition methods in science and engineering
      {XXIII}}, volume 116 of {\em Lect. Notes Comput. Sci. Eng.}, pages 285--292.
      Springer, Cham, 2017.
    
    \bibitem{ZVDK:2018:BDDCHCurl}
    S.~Zampini, P.~Vassilevski, V.~Dobrev, and T.~Kolev.
    \newblock Balancing domain decomposition by constraints algorithms for
      curl-conforming spaces of arbitrary order.
    \newblock In {\em Domain decomposition methods in science and engineering
      {XXIV}}, volume 125 of {\em Lect. Notes Comput. Sci. Eng.}, pages 103--116.
      Springer, Cham, 2018.
    
    \end{thebibliography}
\end{document}